\tikzset{
  c/.style={every coordinate/.try}
}
\tikzstyle arrowstyle=[scale=1]
\tikzstyle directed=[postaction={decorate,decoration={markings,mark=at position 0.6 with {\arrow[arrowstyle]{stealth};}}}]
\tikzstyle reverse directed=[postaction={decorate,decoration={markings,mark=at position 0.4 with {\arrowreversed[arrowstyle]{stealth};}}}]
\tikzstyle dot=[style={circle,inner sep=1pt,fill}]
\def\qed{\nopagebreak\hfill{\rule{4pt}{7pt}}}
 \newtheorem{thm}{Theorem}[section]
\newtheorem{lem}[thm]{Lemma}
\newtheorem{prop}[thm]{Proposition}
\newtheorem{example}[thm]{Example}
\newtheorem{coro}[thm]{Corollary}
\newtheorem{conj}[thm]{Conjecture}
\numberwithin{equation}{section}
\newdimen\Squaresize \Squaresize=11pt
\newdimen\Thickness \Thickness=0.7pt
\def\Square#1{\hbox{\vrule width \Thickness
   \vbox to \Squaresize{\hrule height \Thickness\vss
    \hbox to \Squaresize{\hss#1\hss}
   \vss\hrule height\Thickness}
\unskip\vrule width \Thickness} \kern-\Thickness}
\def\Vsquare#1{\vbox{\Square{$#1$}}\kern-\Thickness}
\def\moins{\raise 1pt\hbox{{$\scriptstyle -$}}}
\newcommand\PT{\mathcal{PT}}
\newcommand\urr{\operatorname{urr}}
\newcommand\wnm{\operatorname{w\overline{m}}}
\newcommand\WNM{\operatorname{W\overline{M}}}
\newcommand\topone{\operatorname{topone}}
\newcommand\rlm{\operatorname{rlm}}
\newcommand\RLM{\operatorname{Rlm}}
\newcommand\des{\operatorname{des}}
\newcommand\asc{\operatorname{asc}}
\newcommand\rlmin{\operatorname{rlmin}}
\newcommand\lrmin{\operatorname{lrmin}}
\newcommand\lrmax{\operatorname{lrmax}}
\newcommand\rlmax{\operatorname{rlmax}}
\newcommand\ides{\operatorname{ides}}
\newcommand\A{\operatorname{A}}
\newcommand\T{\operatorname{T}}
\newcommand\N{\operatorname{N}}
\newcommand\I{\operatorname{I}}
\newcommand\LRMAX{\operatorname{Lrmax}}
\newcommand\LRMIN{\operatorname{Lrmin}}
\newcommand\RLMAX{\operatorname{Rlmax}}
\newcommand\RLMIN{\operatorname{Rlmin}}
\newcommand\ZERO{\operatorname{Zero}}
\newcommand\MAX{\operatorname{Max}}
\newcommand\st{\operatorname{st}}
\newcommand\DIST{\operatorname{Dist}}
\newcommand\zero{\operatorname{zero}}
\newcommand\dist{\operatorname{dist}}
\newcommand\DES{\operatorname{Des}}
\newcommand\IDES{\operatorname{Ides}}
\newcommand\ASC{\operatorname{Asc}}
\newcommand\Des{\operatorname{Des}}
\newcommand\comple{\operatorname{c}}
\newcommand\inver{\operatorname{i}}
\begin{document}

\begin{center}
{\large\bf  Block decomposition and  statistics arising from permutation tableaux}
\end{center}

\begin{center}
Joanna N. Chen

College of Science\\
Tianjin University of Technology\\
Tianjin 300384, P.R. China

joannachen@tjut.edu.cn

\end{center}
\begin{abstract}
Permutation statistics  $\wnm$ and $\rlm$ are both arising
from permutation tableaux. $\wnm$ was introduced by Chen and Zhou,
which was proved equally distributed with the number of
unrestricted rows of a permutation tableau. While $\rlm$ is shown
by Nadeau equally distributed with the number of
$1$'s in the first row of a permutation tableau.

In this paper, we investigate the joint distribution of $\wnm$
and $\rlm$. Statistic $(\rlm,\wnm,\rlmin,\des,(\underline{321}))$  is shown equally distributed
with $(\rlm,\rlmin,\wnm,\des,(\underline{321}))$ on $S_n$. Then the generating function of $(\rlm,\wnm)$ follows. An involution is constructed to
explain the symmetric property of the generating function.
Also, we study the triple statistic $(\wnm,\rlm,\asc)$, which is
shown to be equally distributed with $(\rlmax-1,\rlmin,\asc)$
as studied by Josuat-Verg$\grave{e}$s.
The main method we adopt throughout the paper is  constructing
bijections based on a block decomposition of permutations.
\end{abstract}

\noindent \textbf{Keywords:}  bijection, involution, permutation tableaux, block decomposition, pattern

\section{Introduction}

In this paper, we mainly investigate two permutation statistics $\wnm$ and $\rlm$  which are arising from permutation tableaux.

Permutation tableaux were introduced by Steingr\'{\i}msson and Williams \cite{Einar}. They are related to the enumeration of totally positive Grassmannian cells \cite{LW,Postnikov,Scott,Williams} and a statistical physics model called Partially Asymmetric Exclusion Process (PASEP) \cite{Corteel,CorteelWilliamsAAM,CorteelWilliamsInt,CorteelWilliamsDuke,
CorteelBrak,Matthieu}.
Several papers on the combinatorics of permutation tableaux  have also been published, see \cite{Burstein,Chen,ChenLiu,CorteelKim,CorteelNadeau,CorteelMatthieu,Nadeau}.

A \emph{permutation tableau} is a Ferrers diagram with possibly empty rows together with a $0,1$-filling of the cells satisfying the following conditions:

1. each column has at least one $1$,

2. there is no $0$ which has a $1$ above it in the same column and a $1$ to the left of it in the same row.

The \emph{length} of a permutation tableau is defined to be the number of rows plus the number of columns. Let $\PT(n)$ denote the set of permutations
of length $n$.

Several statistics over permutation tableaux are defined,
among which $\urr$ and $\topone$ are two interesting ones.
 A $0$ in a permutation tableau
is \emph{row-restricted} if there is a $1$ above in the same column.
A row is said to be \emph{unrestricted} if it  contains  no row-restricted
$0$. For $T\in \PT(n)$, as given by Corteel and Kim \cite{CorteelKim},
let $\urr(T)$  be the number of unrestricted rows  of $T$ and
 let $\topone(T)$  be the number of $1$'s in the first row of $T$.
Using recurrence relations, Corteel and Nadeau \cite{CorteelNadeau}
 obtained an explicit formula for the generating function of permutation tableaux of length $n$ with respect to the statistics $\urr$ and $\topone$.  Corteel and Kim \cite{CorteelKim} rewrote this formula
as follows
\begin{equation}\label{equ:corteelNadeau}
  \sum_{T\in\mathcal{PT}(n)} x^{\urr(T)-1} y^{\topone(T)} = (x+y)_{n-1},
\end{equation}
where  $(x)_n=x(x+1)\cdots(x+n-1)$ for $n\geq1$ with $(x)_0=1$.
Moreover, they gave two beautiful bijective proofs of
(\ref{equ:corteelNadeau}).

Permutation tableaux are in bijections with
permutations. Let $[n]=\{1,2, \ldots, n \}$ and  $S_n$ be the set of permutations on $[n]$. Steingr\'{\i}msson and Williams \cite{Einar} gave a Zig-Zag map $\Phi$ from $\PT(n)$ to $S_n$.
Given $T\in \PT(n)$, we label $T$ as follows. First, label the steps in the south-east border with $1,2, \cdots,n$ from north-east
to south-west. Then, label a row (resp. column) with $i$ if the row contains the south (resp. west) step with label $i.$
A zigzag path on a permutation tableau is a path
entering from the left of a row or the top of a column,
going to the east or to the south changing the direction alternatively
whenever it meets a $1$ until exiting the tableau.
Let $\pi=\pi_1 \pi_2 \cdots \pi_n =\Phi(T)$, where $\pi_i=j$
if the zigzag path corresponding to label $i$ exits $T$ from a row or a column labeled by $j$.
As an example, $\Phi(\pi)=8,6,1,5,3,4,9,2,7,11,10$ for $\pi$ given in the left of Figure \ref{fig:tableau}.

\begin{figure}[!htbp]
\begin{center}
\begin{tikzpicture}[line width=0.7pt,scale=0.6]
\coordinate (O) at (0,0);
\coordinate (C) at (14,0);
\draw[thick] (O)--++(0,-5);
\draw[thick](O)++(1,0)--++(0,-5);
\draw[thick](O)++(2,0)--++(0,-4);
\draw[thick](O)++(3,0)--++(0,-4);
\draw[thick](O)++(4,0)--++(0,-3);
\draw[thick](O)++(5,0)--++(0,-3);
\draw[thick](O)++(6,0)--++(0,-2);
\draw[thick](O)--++(6,0);
\draw[thick](O)++(0,-1)--++(6,0);
\draw[thick](O)++(0,-2)--++(6,0);
\draw[thick](O)++(0,-3)--++(5,0);
\draw[thick](O)++(0,-4)--++(3,0);
\draw[thick](O)++(0,-5)--++(1,0);

\path (O)++(-0.5,-0.5) node {$1$} ++(1,0) node {$0$} ++(1,0) node {$1$}
++(1,0) node {$1$} ++(1,0) node {$0$}++(1,0) node {$0$}++(1,0) node {$1$};
\path(O)++ (-0.5,-1.5) node {$2$} ++(1,0) node {$0$} ++(1,0) node {$0$}
++(1,0) node {$0$} ++(1,0) node {$1$}++(1,0) node {$1$}++(1,0) node {$1$};
\path (O)++(-0.5,-2.5) node {$4$} ++(1,0) node {$0$} ++(1,0) node {$0$}
++(1,0) node {$0$} ++(1,0) node {$0$}++(1,0) node {$1$};
\path (O)++(-0.5,-3.5) node {$7$} ++(1,0) node {$0$} ++(1,0) node {$1$}
++(1,0) node {$1$};
\path (O)++(-0.5,-4.5) node {$10$} ++(1,0) node {$1$};
\path (O)++(0.5,0.5) node {$11$} ++(1,0) node {$9$}++(1,0) node {$8$}
++(1,0) node {$6$}++(1,0) node {$5$}++(1,0) node {$3$};

\draw[thick] (C)--++(0,-5);
\draw[thick](C)++(1,0)--++(0,-5);
\draw[thick](C)++(2,0)--++(0,-4);
\draw[thick](C)++(3,0)--++(0,-4);
\draw[thick](C)++(4,0)--++(0,-3);
\draw[thick](C)++(5,0)--++(0,-3);
\draw[thick](C)++(6,0)--++(0,-2);
\draw[thick](C)--++(6,0);
\draw[thick](C)++(0,-1)--++(6,0);
\draw[thick](C)++(0,-2)--++(6,0);
\draw[thick](C)++(0,-3)--++(5,0);
\draw[thick](C)++(0,-4)--++(3,0);
\draw[thick](C)++(0,-5)--++(1,0);
\path (C)++(-0.5,-0.5) node {$1$} ++(2,0)
node{$\uparrow$} ++(1,0)
node{$\uparrow$}++(3,0)node {$\uparrow$};
\path(C)++ (-0.5,-1.5) node {$2$}
++(3,0) node {$\leftarrow$} ++(1,0) node {$\uparrow$}++(1,0) node {$\uparrow$};
\path (C)++(-0.5,-2.5) node {$4$} ++(4,0) node {$\leftarrow$};
\path (C)++(-0.5,-3.5) node {$7$} ;
\path (C)++(-0.5,-4.5) node {$10$} ++(1,0) node {$\uparrow$};
\path (C)++(0.5,0.5) node {$11$} ++(1,0) node {$9$}++(1,0) node {$8$}
++(1,0) node {$6$}++(1,0) node {$5$}++(1,0) node {$3$};

\end{tikzpicture}
%%%%%%%%%%%%%%%%%%%%%%%%%%%%%%%%%%%%%%%%
\caption{A permutation tableau (left) and its corresponding alternative representation (right).}
\label{fig:tableau}
\end{center}
\end{figure}

 Corteel and Nadeau \cite{CorteelNadeau} found another two
bijections between permutation tableaux and  permutations,
one of which we denote by $\Gamma$ is given depend on the alternative
representation of permutation tableaux.
As given in Corteel and Kim \cite{CorteelKim}, the alternative representation
of  a permutation tableau $T$ is the diagram obtained from $T$ by replacing the topmost
$1$'s by $\uparrow$'s and the rightmost restricted $0$'s by $\leftarrow$'s and removing the remaining $0$'s and $1$'s, see
Figure \ref{fig:tableau} as an example.  Given $T \in \PT(n)$,  $\pi=\pi_1 \pi_2 \cdots \pi_n=\Gamma(T)$ can be obtained as follows

1. write down the labels of the unrestricted rows of $T$ in increasing order,

2. for each column $i$ from left to right, if row $j$ contains a $\uparrow$ in column $i$ and $i_1, i_2, \cdots, i_r$$(i_1< \cdots <i_r)$ contains a $\leftarrow$ in column $i$, then add $i_1,i_2,\cdots, i_r,i$
in increasing order before $j$ in $\pi$.

As an example, $\Gamma(\pi)=9,4,6,5,2,8,3,1,7,11,10$ for $\pi$ given in the right of Figure \ref{fig:tableau}.

Statistics $\wnm$ and $\rlm$ on permutations are closely related to statistics $\urr$ and $\topone$
by $\Phi$ and $\Gamma$.
We  present the definitions of  $\wnm$ and $\rlm$ first.
Given $\pi =\pi_1 \pi_2 \cdots \pi_n$ $ \in S_n$, the index $i$ is said to be a  \emph{weak excedance} of  $\pi$
if $\pi_i \geq i$. Otherwise, it is called a \emph{non-weak excedance}.
An index $i$ is called a
\emph{mid-point} of $\pi$ if there exist $j<i$ and $k>i$ such that $\pi_j>\pi_i >\pi_k$. Otherwise, $i$ is called a \emph{non-mid-point}.
%To answer a question of Corteel and Kim \cite{CorteelKim}, Chen and Zhou %\cite{Chen} introduced the statistic $\wnm$,
Let $\wnm(\pi)=|\WNM(\pi)|$ and
\[\WNM(\pi)= \{\pi_i| ~i\ \text {is a weak excedance and a non-mid-point of $\pi$ } \}. \]
For a word $w=w_1 w_2 \cdots w_n$ of distinct integers, $w_i$ is called a \emph{$RL$-maximum} of $w$
if $w_i >w_j$ for all $j >i$.
While $w_i$ is called a \emph{$LR$-maximum},
if $w_i > w_j$ for all $j <i$.
 The $RL$-minimum and $LR$-minimum  can be defined similarly.
Let $\RLMAX(w)$, $\LRMAX(w)$, $\RLMIN(w)$  and $\LRMIN(w)$ be the set of
$RL$-maxima, $LR$-maxima, $RL$-minima and $LR$-minima of $w$, respectively.
Set $\rlmax(w)$, $\lrmax(w)$, $\rlmin(w)$ and $\lrmin(w)$
to be  the corresponding numerical statistics.
Let $\RLM(\pi)$ be the set of the RL-maxima of the subword of $\pi$ which is
 to the left of the $1$ in $\pi$. Write $\rlm(\pi)=|\RLM(\pi)|$.
As an example, for $\pi=6, 5, 1, 10, 4, 3, 8, 9, 2, 11, 7, 12$,
we have  $ \WNM(\pi)=\{6,10,11,12 \}$, $\wnm(\pi)=4$,
$\RLM(\pi)=\{5,6\}$ and $\rlm(\pi)=2$.

Corteel and Nadeau \cite{CorteelNadeau}, Nadeau \cite{Nadeau}, Chen and Zhou \cite{Chen}
proved the first, the second and the third item in the following proposition, respectively.

\begin{prop}
For $T \in \mathcal{PT}(n)$,  let $\Gamma(T)=\pi$ and  $\Phi(T)=\sigma$, then
\begin{itemize}
  \item[1.] $\urr(T)=\rlmin(\pi)$;
  \item[2.] $\topone(T)=\rlm(\pi)$;
  \item[3.] $\urr(T)=\wnm(\sigma)$.
\end{itemize}
\end{prop}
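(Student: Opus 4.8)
The plan is to treat items~1 and~2 together through the explicit block-insertion description of $\Gamma$, and item~3 separately through the zigzag paths of $\Phi$. Before anything else I would record two structural facts about the border labeling that drive the whole argument: (I) for every cell of $T$ the label of its column exceeds the label of its row (along the south-east border the labels increase from north-east to south-west, and a cell's west step lies north-east of its south step); and (II) the first row is automatically unrestricted, since no row sits above it.

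For item~1 I would read off from the construction of $\Gamma$ that the labels of the unrestricted rows are written in increasing order and their relative order is never disturbed, whereas every other entry --- a column label $i$, or a restricted row $i_s$ --- is inserted inside the block $i_1,\dots,i_r,i$ placed immediately before the $\uparrow$-row $j$ carrying the topmost $1$ of column $i$. Since $j<i_1<\dots<i_r$ (the restricted rows lie below $j$) and, by fact~(I), the column label satisfies $i>j$, every entry of such a block is strictly larger than its ``anchor'' $j$, and $j$ stands just to the right of the whole block; hence no column label and no restricted row can be a right-to-left minimum. Conversely, I would show each unrestricted row $u$ is a right-to-left minimum: if some $x<u$ lay to its right, then $x$ could not be an unrestricted row (those stay increasing), so $x$ would be a block entry whose anchor $j<x$ also lies to the right of $u$; iterating the descent $x>j>\cdots$ on anchors, which are themselves $\uparrow$-rows, must terminate at an unrestricted $\uparrow$-row that is smaller than $u$ and to its right, contradicting the increasing order. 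Thus the right-to-left minima of $\Gamma(T)$ are exactly the unrestricted rows, giving $\urr(T)=\rlmin(\Gamma(T))$.

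For item~2 I would first locate the value $1$: by~(II) row $1$ is unrestricted, so $1$ heads the initial increasing list, and the entries to its left in the final word are precisely the blocks anchored at row $1$, namely the blocks of the columns whose topmost $1$ lies in the first row. These are exactly the columns in which the first row has a $1$, so there are $\topone(T)$ of them; processing columns left to right makes their labels decrease, and by~(I) each such column label is the maximum of its block and is placed last in it, while every nested block (anchored at a row below the first) is inserted to its left. I would then verify that everything lying to the right of such a column label inside the subword --- later (hence smaller) first-row column labels together with their nested blocks --- is strictly smaller than it, so each of these $\topone(T)$ labels is a right-to-left maximum of the subword, and every other entry is dominated by the next first-row column label to its right. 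This identifies $\RLM(\Gamma(T))$ with the set of first-row column labels and yields $\topone(T)=\rlm(\Gamma(T))$.

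For item~3 I would switch to $\Phi$ and invoke (citing or re-deriving from the path description) the Steingr\'{\i}msson--Williams correspondence that, under $\Phi$, the weak excedances of $\sigma$ are exactly the row labels; since $\sigma$ is a bijection it then suffices to count indices, and the claim becomes: a row is a non-mid-point of $\sigma$ if and only if it is unrestricted, so that $\wnm(\sigma)=|\WNM(\sigma)|$ counts precisely the unrestricted rows. The plan is to trace the zigzag path issuing from a row label and to translate the mid-point condition --- a larger entry to the left and a smaller entry to the right --- into the presence of a row-restricted $0$ in that row, an unrestricted row yielding a path with no such obstruction. Carrying out this translation is the hard part: the exit value of a zigzag path is a fairly global function of the filling, so reading the mid-point inequalities directly off the path demands a careful case analysis of how a path turns at the topmost $1$ of a column versus at a lower $1$, and this is where I expect the real difficulty to lie; by contrast, the domination and nesting claims used in items~1 and~2 are routine once fact~(I) is in place. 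Throughout I would check each step against the running example, where $\urr(T)=3$ and indeed $\rlmin(\Gamma(T))=\wnm(\Phi(T))=3$ while $\topone(T)=\rlm(\Gamma(T))=3$.
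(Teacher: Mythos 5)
The paper itself contains no proof of this proposition: it is stated as a compilation of known results, with items 1, 2 and 3 attributed to Corteel--Nadeau, Nadeau, and Chen--Zhou respectively, so there is no internal argument to compare yours against. Taken on its own terms, your treatment of items 1 and 2 is sound. The two structural facts you isolate are the right ones (though your parenthetical justification of (I) has the compass direction reversed: for a cell in row $r$ and column $c$, the column's west step lies \emph{south-west} of the row's south step along the border, which is why $c>r$). Combined with the observations that each column's block $i_1,\dots,i_r,i$ is an increasing run whose entries all exceed its anchor $j$, that the anchor stays to the right of every entry of its block under all later insertions, and that a nested block has all entries smaller than the column label of the super-block containing it, your anchor-chain argument does identify the right-to-left minima of $\Gamma(T)$ with the unrestricted rows, and the right-to-left maxima of the prefix before $1$ with the columns whose topmost $1$ lies in row $1$. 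These are essentially the arguments found in the cited sources.

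Item 3 is the genuine gap: what you give is a plan, not a proof, and you say so yourself. You correctly reduce the claim to two statements --- the weak excedances of $\sigma=\Phi(T)$ are exactly the row labels of $T$, and a row is a non-mid-point of $\sigma$ precisely when it is unrestricted --- and you correctly locate all the difficulty in the second statement, but you then stop. The mid-point condition asks for indices $j<i<k$ with $\sigma_j>\sigma_i>\sigma_k$, and each of $\sigma_j,\sigma_i,\sigma_k$ is the exit label of a zigzag path, a globally determined quantity; nothing in your sketch explains why a row-restricted $0$ in row $i$ forces such a pair $(j,k)$ to exist, nor why an unrestricted row admits none. That equivalence is precisely the content of the Chen--Zhou result being quoted, so as written your argument for item 3 establishes only the reduction, not the theorem; completing it would require an explicit comparison of exit values of zigzag paths for restricted versus unrestricted rows, which is the part you have deferred.
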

It can be checked that  $\topone(T) \neq \rlm(\Phi(T))$. So it is interesting to investigate the distributions of $(\rlm,\wnm)$.
On the other hand, from the perspective of permutations,
we see that the definition of $\rlm$ is closely related to
$\rlmax$, while $\wnm$ is indeed the statistic $\lrmax$ (proved in Lemma \ref{wnm}). $\rlmax$ and $\lrmax$ are interesting  Stirling statistics over permutations and have been widely studied. So this is another motivation of our work.
More definitions and notations needed in this paper are listed as follows.

Given $\pi=\pi_1 \pi_2 \cdots \pi_n \in S_n$,
 its \emph{reverse} $\pi^r \in S_n$ is given
by  $\pi^r(i)=\pi(n+1-i)$. Its \emph{complement} $\pi^c$ is given by
$\pi^c(i)=n+1-\pi(i)$. Let $\pi^{-1}$  denote the
inverse of $\pi$, where $\pi^{-1}(j)=i$ if and only if $\pi(i)=j$.
For convenient, we also write $\comple(\pi)=\pi^c$ and
$\inver(\pi)=\pi^{-1}$.
Assume that  $W=\{i_1, i_2, \ldots,i_n\}$ with $i_1< i_2 < \cdots <i_n$.
Given a permutation $w$ of $W$, we define $\st(w)=(\sigma,W)$, where
$\sigma \in S_n$ and $\sigma$ is order-isomorphic to $w$.
Conversely, set $\st^{-1}(\sigma,W)=w$. As an example,
$\st(31574)=(21453,\{1,3,4,5,7\})$ and $\st^{-1}(21453,\{1,3,4,5,7\})=31574$.

A \emph{descent} (\emph{ascent}) of $\pi$ is a position $i \in [n-1]$ such that $\pi_i > \pi_{i+1}$ $(\pi_i < \pi_{i+1})$.
The \emph{descent set} and the \emph{ascent set} of $\pi$
are given by
$\Des(\pi)=\{i\colon \pi_i>\pi_{i+1}\}$ and
$\ASC(\pi)=\{i\colon \pi_i<\pi_{i+1}\}$.
Let $\des(\pi)=|\Des(\pi)|$ and $\asc(\pi)=|\ASC(\pi)|$
 be the descent number and ascent number of $\pi$, respectively. Set $\ides(\pi)=\des(\pi^{-1})$.
 %An \emph{excedance} of $\pi$ is the index of $i$
%such that $\pi_i>i$.

An occurrence of a classical \emph{pattern} $p$ in a permutation $\sigma$ is a subsequence of $\sigma$ that is order-isomorphic to $p$. For instance, $41253$ has two occurrences of the pattern $3142$ in its subsequences $4153$ and $4253$. $\sigma$ is said to \emph{avoid} $p$ if there exists no occurrence of $p$ in $\sigma$.
The \emph{vincular pattern} is a generalization of the classical pattern.
Adjacent letters that are
underlined must stay adjacent when they are placed  back to the original permutation.
As an example, $41253$ now contains only one occurrence of the vincular pattern $\underline{31}42$ in its subsequence $4153$, but not in $4253$.
See \cite{Kitaev} for more details about vincular patterns. Given a vincular pattern $\tau$ and a permutation $\pi$, we denote by $(\tau)\pi$ the number of occurrences of the pattern $\tau$ in $\pi$. We write $S_n(\tau)$ as the set of permutations of length $n$ that avoid
 $\tau$.

An \emph{inversion sequence} of length $n$ is a word $s=s_1 s_2 \cdots s_n$
with $0 \leq s_i \leq i-1$ for $1 \leq i \leq n$. Let $I_n$
be the set of all inversion sequences of length $n$.
Assume that
\begin{eqnarray*}
  \ZERO(s)&=& \{i: 1 \leq i \leq n, s_i=0\},\\[2pt]
  \MAX(s)&=& \{i: 1 \leq i \leq n, s_i=i-1\}, \\[2pt]
     \DIST(s)&=& \{2 \leq i \leq n : s_i \neq 0 \,\text{and}\, s_i \neq s_j \,\text{for all}\, j >i \},
 \end{eqnarray*}
and  $\zero(s)$, $\max(s)$ and  $\dist(s)$ is the numerical statistics, respectively.

In this paper, we find that $(\rlm,\wnm)$ and $(\rlm,\rlmin)$ are
equally distributed on $S_n(321)$,  as well as on $S_n$. Particularly, we have the
following theorems.

\begin{thm}\label{thm:sn321}
Statistic $(\rlm, \rlmin, \wnm, \des,\ides)$ are equally distributed with statistic $(\rlm, \wnm, \rlmin, \des, \ides)$  over $S_n(321)$.
\end{thm}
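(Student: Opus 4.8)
The plan is to peel off two of the five coordinates as ``background'' data, reduce the theorem to interchanging the remaining two statistics, and realize that interchange by an involution that acts independently on each block of a direct-sum decomposition. By Lemma~\ref{wnm} we may replace $\wnm$ by $\lrmax$ everywhere, so it suffices to build a bijection $\Psi\colon S_n(321)\to S_n(321)$ with $\rlm(\Psi\pi)=\rlm(\pi)$, $\des(\Psi\pi)=\des(\pi)$ and $\ides(\Psi\pi)=\ides(\pi)$, that swaps the other two, namely $\lrmax(\Psi\pi)=\rlmin(\pi)$ and $\rlmin(\Psi\pi)=\lrmax(\pi)$.

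The obvious candidate is the reverse--complement map $\pi\mapsto\pi^{rc}$. First I would record its elementary properties: it is an involution that preserves $S_n(321)$ (since $321$ is invariant under $180^\circ$ rotation); it trades left-to-right maxima for right-to-left minima, giving $\lrmax(\pi^{rc})=\rlmin(\pi)$ and $\rlmin(\pi^{rc})=\lrmax(\pi)$; and because reverse and complement each exchange $\des$ and $\asc$, their composite $rc$ fixes $\des$, while $(\pi^{rc})^{-1}=(\pi^{-1})^{rc}$ then forces $\ides(\pi^{rc})=\ides(\pi)$ as well. The single defect is that $\pi\mapsto\pi^{rc}$ does \emph{not} preserve $\rlm$: it essentially exchanges the roles of the entries $1$ and $n$, and of positions $1$ and $n$.

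The block decomposition is what repairs this defect. The key structural observation on $S_n(321)$ is that the entries preceding $1$ form an increasing run (any inversion there would create a $321$ with the entry $1$), so $\rlm(\pi)\in\{0,1\}$ and $\rlm(\pi)=0$ exactly when $\pi_1=1$. More generally, writing $\pi=\beta_1\oplus\beta_2\oplus\cdots\oplus\beta_k$ as the direct sum of its sum-indecomposable blocks, each of the four ``additive'' statistics $\lrmax,\rlmin,\des,\ides$ is the sum of its values over the blocks, whereas $\rlm(\pi)=\rlm(\beta_1)$ sees only the first block and merely detects whether $\beta_1$ is the trivial block $(1)$. I would then define $\Psi$ by applying reverse--complement to each block \emph{in place}, without reversing their order: $\Psi(\beta_1\oplus\cdots\oplus\beta_k)=\beta_1^{rc}\oplus\cdots\oplus\beta_k^{rc}$. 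Since $rc$ preserves both $321$-avoidance and sum-indecomposability, this is a well-defined involution; additivity together with the single-block properties of $rc$ yields the swap of $\lrmax$ and $\rlmin$ and the preservation of $\des$ and $\ides$, and because $rc$ fixes the trivial block and carries any indecomposable block of size $\ge 2$ to another one, $\beta_1^{rc}$ is trivial iff $\beta_1$ is, so $\rlm$ is preserved too.

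The point I expect to require the most care is exactly the contrast between acting block-by-block and acting globally: a global $\pi\mapsto\pi^{rc}$ reverses the order of the blocks and therefore corrupts $\rlm$, whereas the in-place version succeeds precisely because all four of $\lrmax,\rlmin,\des,\ides$ are additive over $\oplus$ and $\rlm$ localizes to the leading block. The most delicate bookkeeping is for $\ides$: I must verify both that $(\beta_1\oplus\cdots\oplus\beta_k)^{-1}=\beta_1^{-1}\oplus\cdots\oplus\beta_k^{-1}$ with no descent contributed at the block boundaries, and that $rc$ fixes $\ides$ on each individual block. Once this additivity is in place and the reduction of $\rlm$ on $S_n(321)$ to ``is the leading block trivial'' is established, the involution $\Psi$ delivers the stated equidistribution directly.
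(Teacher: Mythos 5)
Your argument is correct, but the bijection you build is not the one in the paper, and the difference is worth recording. The paper's map $\chi$ uses a coarser decomposition: it strips only the maximal identity prefix $1\,2\cdots l$ and identity suffix $r\,(r{+}1)\cdots n$ and applies reverse--complement \emph{once} to the entire middle factor, invoking Lemma~\ref{lem:1n} (the one-piece statement that $rc$ swaps $(\rlmin,\lrmax)$ and fixes $(\des,\ides)$ when $\pi_1\neq 1$ and $\pi_n\neq n$) together with Corollary~\ref{coro:rlm01} to see that $\rlm$ is unharmed. You instead take the finest direct-sum decomposition into sum-indecomposable blocks and apply $rc$ to every block in place; the two maps genuinely differ, e.g.\ for $\pi=312465=(312)\oplus(1)\oplus(21)$ one gets $\chi(\pi)=213564$ but $\Psi(\pi)=231465$, though both realize the required swap of statistics. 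The underlying ingredients are identical --- everything reduces to the behaviour of $rc$ on a piece with no leading minimum or trailing maximum, plus the observation that on $S_n(321)$ the statistic $\rlm$ only records whether $\pi_1=1$ --- but the bookkeeping is distributed differently. Your version buys a cleaner structural statement (additivity of $\lrmax,\rlmin,\des,\ides$ over $\oplus$, with $\rlm$ localized to the leading block and insensitive to everything except the triviality of that block) at the cost of having to check that $rc$ preserves sum-indecomposability and that $321$-avoidance is detected blockwise under $\oplus$; the paper's version needs only the elementary Lemma~\ref{lem:1n} but applies it to a single standardized middle segment. The points you flag as delicate do all go through: $(\alpha\oplus\beta)^{-1}=\alpha^{-1}\oplus\beta^{-1}$ with no descent created at block boundaries, and $\ides(\pi^{rc})=\ides(\pi)$ because reverse and complement commute and $(\pi^{rc})^{-1}=(\pi^{-1})^{rc}$. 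So the proposal stands as a correct, slightly more structural alternative to the paper's proof.
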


\begin{thm}\label{thm:sn}
Statistic $(\rlm, \rlmin, \wnm, \des,(\underline{321})))$ are equally distributed with statistic $(\rlm, \wnm, \rlmin, \des,(\underline{321})))$  on $S_n$, and hence we have
\begin{equation}\label{equ:rlmwnm}
  \sum_{\pi \in S_n} x^{\wnm(\pi)-1} y^{\rlm(\pi)} = (x+y)_{n-1}.
\end{equation}
\end{thm}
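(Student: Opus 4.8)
The plan is to prove the distributional statement and then read off (\ref{equ:rlmwnm}) as a formal consequence, so I would dispose of the generating function first. Since $\Gamma$ is a bijection from $\PT(n)$ onto $S_n$ and parts 1 and 2 of the Proposition give $\urr(T)=\rlmin(\Gamma(T))$ and $\topone(T)=\rlm(\Gamma(T))$, transporting (\ref{equ:corteelNadeau}) through $\Gamma$ yields
\[
  \sum_{\pi\in S_n} x^{\rlmin(\pi)-1}\,y^{\rlm(\pi)} = (x+y)_{n-1}.
\]
Retaining only the first and third coordinates of the two five-tuples in the theorem shows that $(\rlm,\wnm)$ and $(\rlm,\rlmin)$ are equidistributed on $S_n$, so $\rlmin$ may be exchanged for $\wnm$ in the display above, which is precisely (\ref{equ:rlmwnm}). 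Everything therefore reduces to the equidistribution of the two five-tuples.

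For that equidistribution I would build a bijection $\Theta$ on $S_n$ that fixes $\rlm$, $\des$ and $(\underline{321})$ while interchanging $\rlmin$ and $\wnm$. Using Lemma \ref{wnm} to rewrite $\wnm$ as $\lrmax$, the task becomes the more symmetric one of swapping $\rlmin$ and $\lrmax$. The naive candidate is the reverse-complement $\pi\mapsto\pi^{rc}$: one checks directly that $\rlmin(\pi^{rc})=\lrmax(\pi)$ and $\lrmax(\pi^{rc})=\rlmin(\pi)$, that $\des$ is invariant, and that reverse-complement carries the multiset of maximal descent runs to itself, so the number of consecutive descending triples, i.e.\ $(\underline{321})$, is preserved as well. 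The single defect is $\rlm$: because it records only the RL-maxima lying to the left of the entry $1$, it is scrambled by a global reversal.

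Removing this defect is where the block decomposition should enter. I would cut $\pi$ at the entry $1$, isolating the prefix before $1$ that determines $\rlm$ (indeed $\rlm$ equals $\rlmax$ of this prefix), standardize the complementary blocks with $\st$ and $\st^{-1}$ into genuine permutations, and apply the reverse-complement operation only there, reassembling so that the prefix---and hence $\rlm$---is left untouched while the $\rlmin$--$\lrmax$ interchange is realized on the rest. The invariances of $\rlm$, $\des$, $(\underline{321})$ and the coupled pair must then be verified block by block, and $\Theta$ shown to be an involution, which at the same time would account for the $x\leftrightarrow y$ symmetry of $(x+y)_{n-1}$ advertised in the abstract.

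I expect the main obstacle to be the behaviour of the descents and the consecutive descending triples at the two cuts. At the junction before $1$ there is a forced descent, so an occurrence of $\underline{321}$ may straddle the cut, and the reverse-complement argument only guarantees invariance of $(\underline{321})$ when no such occurrence is split; controlling these boundary triples---while simultaneously protecting $\rlm$ and matching $\des$---is the delicate point, and is presumably exactly what the precise block decomposition is engineered to handle. Once the boundary bookkeeping is pinned down, the equidistribution, and with it (\ref{equ:rlmwnm}), follows.
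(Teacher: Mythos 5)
Your reduction of the generating function (\ref{equ:rlmwnm}) to the equidistribution statement is correct and is essentially how the paper reads it off: transport (\ref{equ:corteelNadeau}) through $\Gamma$ using the identities $\urr(T)=\rlmin(\Gamma(T))$ and $\topone(T)=\rlm(\Gamma(T))$, then replace $\rlmin$ by $\wnm$ via the joint equidistribution of $(\rlm,\rlmin)$ and $(\rlm,\wnm)$. The gap is in the bijection itself. Cutting at the entry $1$ and leaving the prefix before $1$ pointwise fixed while applying reverse--complement (after standardization) to the remainder cannot interchange $\rlmin$ and $\lrmax$: every element of the prefix has $1$ to its right, so the prefix contributes \emph{no} RL-minima of $\pi$, whereas it contributes at least one LR-maximum of $\pi$ whenever it is nonempty (its maximum is one). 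If the prefix is untouched, those LR-maxima persist in the image, so $\lrmax$ of the image is bounded below by the number of LR-maxima in the prefix, and no operation on the part after $1$ can bring it down to $\rlmin(\pi)$. Concretely, for $\pi=231$ your $\Theta$ fixes $\pi$ (the part after $1$ is empty), but $(\rlm,\rlmin,\lrmax)(231)=(1,1,2)$, so the required image must have $(\rlm,\rlmin,\lrmax)=(1,2,1)$; the paper's involution sends $231\mapsto 312$, visibly altering the prefix before $1$ from $23$ to $3$.

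What the paper actually does is finer than a single cut at $1$. It bars the permutation after every RL-minimum and before every LR-maximum, sorts the resulting blocks into four types (T: containing both an LR-maximum and an RL-minimum; A: only an LR-maximum; I: only an RL-minimum; N: neither), keeps the T- and N-blocks in their relative order, and converts each I-block into an A-block and vice versa by the cyclic shifts $R$ and $L$, reinserting them so that the maxima (resp.\ minima) of the relevant blocks increase. The statistic $\rlm$ survives not because the whole prefix before $1$ is fixed, but because all of $\RLM(\pi)$ sits inside the single T-block ending at $1$, which the map preserves as a contiguous factor; $\des$ and $(\underline{321})$ survive because descents never straddle block boundaries and $L,R$ preserve $(\underline{321})$ on the blocks to which they are applied, so the boundary bookkeeping you were worried about never arises. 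Your reverse--complement idea is essentially the paper's proof of Theorem \ref{thm:sn321}, where it works only because on $S_n(321)$ one has $\rlm\in\{0,1\}$ determined by whether $\pi_1=1$ (Corollary \ref{coro:rlm01}); it does not extend to all of $S_n$.
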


Notice that $x$ and $y$ are symmetric in (\ref{equ:rlmwnm}).
We have the following theorem, which we will reprove
by an involution over $S_n$.
 \begin{thm} \label{thm:rlmlrmax}
 Statistics $(\rlm,\wnm-1)$ and $(\wnm-1,\rlm)$ are equally distributed
 on $S_n$.
\end{thm}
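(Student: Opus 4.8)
The plan is twofold. First, note that Theorem~\ref{thm:rlmlrmax} is an immediate consequence of Theorem~\ref{thm:sn}: by (\ref{equ:rlmwnm}) the bivariate generating function $\sum_{\pi\in S_n}x^{\wnm(\pi)-1}y^{\rlm(\pi)}$ equals $(x+y)_{n-1}$, which is symmetric in $x$ and $y$; hence it is unchanged under $x\leftrightarrow y$, and this says exactly that $(\wnm-1,\rlm)$ and $(\rlm,\wnm-1)$ have the same joint distribution on $S_n$. The real content, and what I would actually carry out in order to \emph{reprove} the statement, is to exhibit an explicit involution $\phi$ on $S_n$ that witnesses this symmetry, namely one satisfying $\rlm(\phi(\pi))=\wnm(\pi)-1$ and $\wnm(\phi(\pi))=\rlm(\pi)+1$ for every $\pi$.

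To build $\phi$ I would first invoke Lemma~\ref{wnm} to replace $\wnm$ by $\lrmax$, so that the target becomes an involution interchanging $\lrmax-1$ with $\rlm$. The shape of $\phi$ is dictated by the factorization $(x+y)_{n-1}=\prod_{j=0}^{n-2}(x+y+j)$: each factor $x+y+j$ should correspond to a stage of a block decomposition of $\pi$ offering one move that creates a new left-to-right maximum (contributing to $\wnm-1$), one move that creates a new right-to-left maximum in the block lying to the left of the entry $1$ (contributing to $\rlm$), and $j$ moves that affect neither statistic. Concretely, I would use the block decomposition to biject $S_n$ with pairs $(\rho,c)$, where $\rho$ is a skeleton recording the placement of the $\lrmax(\pi)-1+\rlm(\pi)$ distinguished records together with all non-record entries, and $c$ is a $2$-colouring of these distinguished records into a $\wnm$-class and an $\rlm$-class, the first of size $\wnm(\pi)-1$ and the second of size $\rlm(\pi)$.

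The involution is then transparent on the encoded side: $\phi$ keeps the skeleton $\rho$ fixed and swaps the two colours, turning every $\wnm$-type record into an $\rlm$-type one and vice versa. Swapping colours is patently an involution and transposes the two class sizes, so, modulo the correctness of the encoding, it yields $\rlm(\phi(\pi))=\wnm(\pi)-1$ and $\wnm(\phi(\pi))=\rlm(\pi)+1$; decoding $(\rho,c)$ back to a word produces $\phi(\pi)\in S_n$, and applying the construction twice restores the original colouring and hence $\pi$.

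The main obstacle is the encoding itself, that is, disentangling the two statistics inside one decomposition. The difficulty is that $\wnm=\lrmax$ is a left-record statistic of the whole word, whereas $\rlm$ is anchored at the entry $1$ and sees only the factor $\alpha$ in $\pi=\alpha\,1\,\beta$, where $\rlm(\pi)=\rlmax(\alpha)$; moreover the left-to-right maxima of $\pi$ straddle $\alpha$ and $\beta$ through the threshold $\max(\alpha)$. This interaction is exactly why the naive candidates fail: none of reverse, complement, or inverse interchanges the two statistics (for instance complement sends $2314\in S_4$, of type $(\rlm,\wnm-1)=(1,2)$, to $3241$, of type $(1,1)$, rather than to the required type $(2,1)$). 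Thus the crux is to set up the block decomposition so that the $\wnm$-records and the $\rlm$-records become genuinely independent coordinates, to check that recolouring always decodes to a bona fide permutation in $S_n$, and to verify on the nose that the resulting map squares to the identity. Once the decomposition is in place, tracking how $\lrmax$ and $\rlmax(\alpha)$ change under a single colour swap should be routine.
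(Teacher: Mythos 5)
Your first paragraph already proves the statement: Theorem~\ref{thm:sn} is established independently in Section~2, and the symmetry of $(x+y)_{n-1}$ under $x\leftrightarrow y$ immediately gives the equidistribution of $(\rlm,\wnm-1)$ and $(\wnm-1,\rlm)$. This is in fact exactly the remark the paper makes just before stating the theorem (``Notice that $x$ and $y$ are symmetric in (\ref{equ:rlmwnm})''), so on that score the proposal is correct and complete. Where you diverge is in the promised combinatorial ``reproof.'' The paper's involution $\phi$ does not go through a records-plus-colouring encoding of the factorization of $(x+y)_{n-1}$; instead, after using Lemma~\ref{wnm} to trade $\wnm$ for $\lrmax$, it reduces to permutations ending in $n$ (respectively $1$) by standardizing the prefix of $\pi$ up to the position of $n$ (respectively of $1$) and leaving the suffix untouched, and on $S_n^n$ it uses a completely explicit six-block decomposition $\pi=a\,b\,1\,c\,d\,n$ (cut at the maximum of the part before $1$ and at the first later entry exceeding it), mapped to $b^r\,c\,n\,d^r\,a^r\,1$; the exchange of the two statistics is then read off block by block. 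Your sketched scheme --- a skeleton plus a $2$-colouring of distinguished records, with $\phi$ swapping the colours --- is a plausible and arguably more conceptual way to explain the product formula, but you yourself flag that constructing the encoding (making the $\lrmax$-records and the $\rlm$-records independent coordinates and ensuring recolouring decodes to a permutation) is the entire difficulty, and you do not carry it out. So as a second, bijective proof the proposal has an acknowledged gap, but the theorem as stated is already settled by your generating-function argument.
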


Josuat-Verg$\grave{e}$s \cite{Matthieu} showed that
\[
Z_N = \sum_{\pi \in S_{N+1}} \alpha^{-\rlmax(\pi)+1} \beta^{-\rlmin(\pi)+1}y^{\asc(\pi)-1} q^{(\underline{31}2) \pi },
\]
where $Z_N$ is the partition function of a partially asymmetric exclusion process (PASEP) on a finite number of sites with open and directed boundary conditions, see Theorem 1.3.1 in  \cite{Matthieu}. Inspired by this, we obtain the
following equidistribution by constructing bijections.

\begin{thm}\label{thm:rlm-rlmax-1}
Statistics $(\rlm,\wnm,\asc)$ and $(\rlm,\rlmin,\asc)$  are equally
distributed with $(\rlmax-1,\rlmin$$,\asc)$
 on $S_n$.
\end{thm}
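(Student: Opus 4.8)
The plan is to split the asserted chain into two equidistributions,
\[
(\rlm,\wnm,\asc)\ \sim\ (\rlm,\rlmin,\asc)\ \sim\ (\rlmax-1,\rlmin,\asc),
\]
and to establish each link by itself. The first link comes for free from the earlier work in this paper, whereas the second is where a new block-decomposition bijection must be built.

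For the first link I would read off Theorem~\ref{thm:sn}: the bijection it provides fixes $\rlm$ and $\des$ and interchanges $\rlmin$ with $\wnm$. Since $\asc=n-1-\des$, fixing $\des$ is the same as fixing $\asc$, so after forgetting the $(\underline{321})$-coordinate the joint distributions of $(\rlm,\wnm,\asc)$ and $(\rlm,\rlmin,\asc)$ coincide, and nothing more is needed.

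For the second link I would decompose each $\pi\in S_n$ at its letter $1$, writing $\pi=L\,1\,R$ with $L,R$ the factors on either side of $1$. Three local identities then hold: $\rlm(\pi)=\rlmax(L)$; $\rlmin(\pi)=\rlmin(R)+1$, because no letter of $L$ can be a right-to-left minimum of $\pi$ (the letter $1$ lies to its right), so the minima are those of $R$ together with $1$; and $\asc(\pi)=\asc(L)+\asc(R)+\varepsilon$, where $\varepsilon=1$ when $R\neq\emptyset$ and $\varepsilon=0$ otherwise, since the step into $1$ is always a descent and the step out of $1$ always an ascent. All of these depend only on the patterns $\st(L),\st(R)$ and on the sizes $|L|,|R|$. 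The target triple $(\rlmax-1,\rlmin,\asc)$ should then be obtained by reassembling an $L$-part and an $R$-part into one permutation whose \emph{global} right-to-left maxima number $\rlmax(L)+1$ and whose global right-to-left minima number $\rlmin(R)+1$, with the ascent count matched exactly.

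The crux, and the step I expect to be hardest, is that $\rlmax$ in the target is a global statistic while $\rlm$ in the source only sees the factor $L$. A natural first attempt, writing $b=|R|$, is to standardize $R$ onto $\{1,\dots,b\}$, standardize $L$ onto $\{b+2,\dots,n\}$, and append the sentinel $b+1$, forming $\widetilde R\,\widetilde L\,(b+1)$. One checks directly that this has exactly the three wanted statistics: the terminal sentinel $b+1$ is at once a global right-to-left maximum and a global right-to-left minimum (supplying both $+1$'s), the high block $\widetilde L$ contributes $\rlmax(L)$ maxima and no minimum, and the low block $\widetilde R$ contributes $\rlmin(R)$ minima and no maximum. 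The difficulty is that this map throws away which of the values $\{2,\dots,n\}$ originally belonged to $L$ and which to $R$; it collapses the $\binom{n-1}{|L|}$ possible value-splits and so is far from injective. The real work of the proof is to restore precisely this splitting data without perturbing the record counts --- that is, to thread the large $L$-letters in among the small $R$-letters so that every small non-record acquires a larger letter to its right (removing it as a global maximum) while the minima skeleton of $R$ and the lone sentinel are left intact. I would carry this out by an insertion recursion on $n$, matching the effect of inserting the largest letter on $(\rlmax,\rlmin,\asc)$ against the effect of inserting the letter $1$ on $(\rlm,\rlmin,\asc)$; to line the result up with Josuat-Verg\`es's partition function I would, if needed, transcribe both sides into inversion sequences so that the counts become visibly equal.
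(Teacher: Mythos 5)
Your first link is exactly the paper's: Theorem~\ref{thm:sn} supplies a bijection that fixes $\rlm$ and $\des$ (hence $\asc=n-1-\des$) and swaps $\wnm$ with $\rlmin$, so $(\rlm,\wnm,\asc)\sim(\rlm,\rlmin,\asc)$ needs nothing further. The gap is the second link, and it sits precisely where you locate the ``real work'': that work is never done. Your sentinel map $\pi=L\,1\,R\mapsto \widetilde R\,\widetilde L\,(b+1)$ does realize the correct triple $(\rlmax-1,\rlmin,\asc)$, but it is $\binom{n-1}{|L|}$-to-one and its image is only the thin set of permutations consisting of a low block on $\{1,\dots,b\}$, then a high block on $\{b+2,\dots,n\}$, then the final letter $b+1$; equidistribution does not follow from exhibiting one well-behaved representative per fiber, and you offer no counting identity equating the fiber sizes with the number of arbitrary permutations achieving each target triple. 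The closing sentence (``I would carry this out by an insertion recursion\dots transcribe both sides into inversion sequences'') is a statement of intent rather than an argument, and the promised recursion is not routine: inserting the largest letter into $\sigma$ perturbs the two \emph{global} record statistics $\rlmax$ and $\rlmin$ simultaneously, whereas inserting $1$ into $\pi$ affects $\rlm$ and $\rlmin$ through the two factors $L$ and $R$ in a completely decoupled way, so the two recursions do not visibly align.

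It is worth seeing how the paper evades exactly this obstruction. It runs the second link from $(\rlm,\wnm,\asc)$ rather than from $(\rlm,\rlmin,\asc)$: since $\wnm=\lrmax$ is itself global, it decomposes over $\pi=x\,1\,y$ as $\lrmax(x)+\lrmax_{>\max(X)}(y)$, which can be matched term by term against the global decomposition $\rlmin(x')+\rlmin_{<\min(n+1-X)}(y')$ of the image $\sigma=y'\,n\,x'$. The value split is not discarded but complemented ($X\mapsto n+1-X$, $Y\mapsto n+1-Y$), and the standardized left factor is passed through an involution $\alpha$ with $(\asc,\rlmax,\lrmax,\rlmin)\,\pi=(\asc,\rlmax,\rlmin,\lrmax)\,\alpha(\pi)$ (Lemma~\ref{lem:keepascrlmax}), built from the Baril--Vajnovszki code and an involution on inversion sequences exchanging $\max$ with $\rlmin$. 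That involution is the technical heart of the theorem and has no counterpart in your proposal.
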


The paper is organized as follows. In Section \ref{sec:2}, we  present bijective proofs of Theorem \ref{thm:sn321} and \ref{thm:sn}.
In Section \ref{sec:3}, we construct an involution
which implies Theorem \ref{thm:rlmlrmax}.
In section \ref{sec:4}, we bijectively prove Theorem \ref{thm:rlm-rlmax-1} by using inversion sequences.

\section{Bijective proofs of Theorem \ref{thm:sn321} and  \ref{thm:sn}}\label{sec:2}

In this section, we first deduce that statistic $\wnm$ is indeed statistic $\lrmax$. Then, two bijections based on a block decomposition of permutations
are given which imply Theorem \ref{thm:sn321} and  \ref{thm:sn}, respectively.

\begin{lem}\label{wnm}
Given $\pi \in S_n$, $\pi_i \in \WNM(\pi)$ if and only if $\pi_i$ is a LR-maximum of $\pi$.
\end{lem}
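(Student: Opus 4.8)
The plan is to prove the biconditional by unfolding both defining conditions and showing they pin down exactly the LR-maxima. Recall that $\pi_i \in \WNM(\pi)$ means simultaneously that $i$ is a \emph{weak excedance} (so $\pi_i \ge i$) and a \emph{non-mid-point} (so there do \textbf{not} exist $j<i$ and $k>i$ with $\pi_j > \pi_i > \pi_k$). I want to show this conjunction is equivalent to $\pi_i$ being an LR-maximum, i.e. $\pi_i > \pi_j$ for all $j < i$.

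First I would prove the easy direction ($\Leftarrow$): assume $\pi_i$ is an LR-maximum. Then $\pi_i > \pi_j$ for every $j < i$, so in particular no index $j<i$ can have $\pi_j > \pi_i$; this immediately kills the mid-point condition, so $i$ is a non-mid-point. For the weak excedance, note that an LR-maximum $\pi_i$ exceeds all of $\pi_1, \ldots, \pi_{i-1}$, so these $i-1$ values together with $\pi_i$ itself are $i$ distinct values all $\le \pi_i$; since they are distinct positive integers this forces $\pi_i \ge i$, giving a weak excedance. Hence $\pi_i \in \WNM(\pi)$.

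For the forward direction ($\Rightarrow$), I would argue by contraposition: suppose $\pi_i$ is \textbf{not} an LR-maximum, so there is some $j<i$ with $\pi_j > \pi_i$. I then want to exhibit some $k>i$ with $\pi_i > \pi_k$, which would make $i$ a mid-point and hence exclude $\pi_i$ from $\WNM(\pi)$. The cleanest route is a counting argument: among the positions $1, \ldots, i$ at least one value (namely $\pi_j$) exceeds $\pi_i$, so the values $\pi_1, \ldots, \pi_i$ include at most $i-1$ values that are $\le \pi_i$; equivalently, fewer than $i$ of the values $1, \ldots, \pi_i$ appear in the first $i$ positions. Since all $\pi_i$ values in $\{1,\dots,\pi_i\}$ must appear somewhere in $\pi$, at least one of them, say $\pi_k$ with $\pi_k < \pi_i$ (it cannot equal $\pi_i$), must occur at a position $k>i$. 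This $k$ together with the given $j$ witnesses that $i$ is a mid-point, so $\pi_i \notin \WNM(\pi)$, completing the contrapositive.

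The main obstacle, such as it is, lies in the forward direction: one must be careful to convert ``$\pi_i$ is not an LR-maximum'' into the \emph{existence} of a strictly smaller value to its right, rather than merely the absence of a larger value. The pigeonhole bookkeeping above handles this, but the argument hinges on correctly counting how many of the values $1, \ldots, \pi_i$ can fit into the first $i$ slots once one slot is occupied by the larger value $\pi_j$. I would state this inequality explicitly (there are exactly $\pi_i$ values in $\{1,\dots,\pi_i\}$, and at most $i-1$ positions in $[i]$ are available for them once $\pi_j>\pi_i$ occupies one) and then invoke that $\pi_i \ge i$ is \emph{not} available here—indeed the whole point is that failure of the LR-maximum property forces a smaller value to spill past position $i$.
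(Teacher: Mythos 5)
Your backward direction is correct and coincides with the paper's: an LR-maximum has no larger entry to its left (so $i$ is a non-mid-point), and the $i$ distinct values $\pi_1,\dots,\pi_i$ all being at most $\pi_i$ forces $\pi_i\ge i$ (a weak excedance). The forward direction, however, contains a genuine gap. You argue by contraposition: from some $j<i$ with $\pi_j>\pi_i$ you try to produce $k>i$ with $\pi_k<\pi_i$ by counting, asserting that one of the $\pi_i$ values in $\{1,\dots,\pi_i\}$ must spill past position $i$ because only $i-1$ of the first $i$ slots remain available for them. That conclusion needs $\pi_i\ge i$ (more values than remaining slots), and you explicitly state that $\pi_i\ge i$ is \emph{not} available in your contrapositive. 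Without it the spilling claim is false: take $\pi=312$ and $i=3$. Then $\pi_3=2$ is not an LR-maximum and $\pi_1=3>\pi_3$, yet there is no position $k>3$ at all, so $i=3$ is not a mid-point. The lemma survives in this example only because $\pi_3=2<3$ fails to be a weak excedance, i.e., $\pi_3\notin\WNM(\pi)$ for the \emph{other} reason --- a case your argument, which aims solely at exhibiting a mid-point, does not cover.

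The repair is exactly what the paper does: prove the implication directly rather than by contraposition. Assuming $\pi_i\in\WNM(\pi)$ hands you the weak excedance $\pi_i\ge i$ as a hypothesis; then, supposing for contradiction that $\pi_j>\pi_i$ for some $j<i$, your own pigeonhole count does close: there are $\pi_i-1\ge i-1$ values smaller than $\pi_i$, but at most $i-2$ of the positions $1,\dots,i-1$ can hold them once position $j$ is occupied by a larger value, so some $\pi_k<\pi_i$ with $k>i$ exists, making $i$ a mid-point and contradicting $\pi_i\in\WNM(\pi)$. Alternatively, keep the contrapositive but split into two cases: if $\pi_i<i$ then $i$ is not a weak excedance and you are done immediately; if $\pi_i\ge i$ then your counting argument goes through as written.
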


\begin{proof}
Suppose that $\pi_i \in \WNM(\pi)$, we claim that
$\pi_i$ is a LR-maximum  of $\pi$. Assume to the contrary that there exists $j <i$ such that  $\pi_j > \pi_i$. Since $\pi_i$ is a weak excedance of $\pi$, it is easily checked that there exists  $k>i$ such that $\pi_k< \pi_i$. Hence,
 $\pi_j\pi_i \pi_k$ is a $321$-pattern of $\pi$. It follows  that
 $\pi_i$ is a mid-point of $\pi$, which contradicts with the fact that $\pi_i \in \WNM(\pi)$. The claim is verified.

Conversely, assume that $\pi_i$ is a LR-maximum  of $\pi$.  Since it is larger that all the elements
to its left, then $\pi_i \geq i$ and $\pi_i$ is a non-mid-point.
It follows that $\pi_i \in \WNM(\pi)$.
This completes the proof.
\end{proof}

%\begin{lem}\label{nwnm}
%Given $\pi \in S_n$, $\pi_i \in \NWNM(\pi)$ if and only if $\pi_i \neq i$ and $\pi_i$ is an RL-minimum of $\pi$.
%\end{lem}
%
%\begin{proof}
%Suppose that $\pi_i$ is an element of $\NWNM(\pi)$, clearly we have $\pi_i \neq i$. We claim  that
%$\pi_i$ is an RL-minimum of $\pi$.
%Since $\pi_i <i $, then there exist an integer $j$ such that
%$j<i$ and $\pi_j > \pi_i$. Notice that $\pi_i$ is a non-mid-point.
%It follows that $\pi_k > \pi_i$ for all $k>i$, which verifies
%the claim.
%
%
%Conversely, assume that $\pi_i \neq i$ and $\pi_i$ is an RL-minimum  of $\pi$, we wish to show that $\pi_i \in \NWNM(\pi)$. Clearly, $\pi_i$ is a non-mid-point. It is left to show that $\pi_i <i$. Assume to the
%contrary that $\pi_i>i$, then there exists $j>i$ such that $\pi_j<\pi_i$, which contradicts with the fact that $\pi_i$ is an RL-minimum.
%This completes the proof.
%\end{proof}

%\begin{thm}\label{thm:sn321}
%Statistics $(\rlm, \rlmin, \lrmax, \des,\ides)$ are equally distributed with $(\rlm, \lrmax, \rlmin, \des, \ides)$  over $S_n(321)$.
%\end{thm}
Now, we proceed to prove Theorem \ref{thm:sn321}.
As pointed out by Burstein \cite{Burstein}, $321$-avoiding permutations
have the property given in Lemma \ref{lem:321}. Based on this, Corollary \ref{coro:rlm01} follows obviously.

\begin{lem}\label{lem:321}
$\pi$ is $321$-avoiding if and only if each element of $\pi$
is  either a LR-maximum or an RL-minimum, i.e. if and only if
$\pi$ is identity or a union of two nondecreasing subsequences.
\end{lem}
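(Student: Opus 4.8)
The statement to prove is Lemma \ref{lem:321}, which characterizes $321$-avoiding permutations as exactly those in which every element is either a left-to-right maximum or a right-to-left minimum.

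\medskip

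The plan is to prove the biconditional in both directions, and then separately verify the equivalence with being a union of two nondecreasing subsequences. For the forward direction, I would argue by contraposition: suppose some element $\pi_i$ is neither a LR-maximum nor an RL-minimum. Failing to be a LR-maximum means there exists $j<i$ with $\pi_j>\pi_i$; failing to be an RL-minimum means there exists $k>i$ with $\pi_k<\pi_i$. Then $\pi_j,\pi_i,\pi_k$ occupy positions $j<i<k$ with values $\pi_j>\pi_i>\pi_k$, which is precisely an occurrence of the pattern $321$. Hence if $\pi$ is $321$-avoiding, no such $\pi_i$ can exist, so every element is a LR-maximum or an RL-minimum.

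\medskip

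For the converse, I would again use contraposition. Suppose $\pi$ contains a $321$-pattern, say positions $a<b<c$ with $\pi_a>\pi_b>\pi_c$. I claim the middle element $\pi_b$ is neither a LR-maximum nor an RL-minimum: indeed $\pi_a>\pi_b$ with $a<b$ shows $\pi_b$ is not a LR-maximum, and $\pi_c<\pi_b$ with $c>b$ shows $\pi_b$ is not an RL-minimum. This contradicts the assumption that every element is one or the other, completing the equivalence of the first two conditions.

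\medskip

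Finally I would establish the equivalence with the decomposition into two nondecreasing subsequences. If every element is a LR-maximum or an RL-minimum, then the LR-maxima form one nondecreasing subsequence and the RL-minima form another (both are visibly increasing by definition), and together they cover all positions, so $\pi$ is a union of two nondecreasing subsequences. Conversely, a union of two nondecreasing subsequences cannot contain a $321$-pattern, since by pigeonhole two of the three decreasing entries would lie in the same nondecreasing subsequence, a contradiction; hence such a permutation is $321$-avoiding. I expect no serious obstacle here, as the whole argument rests only on unwinding the definitions of the pattern and of the two extremal-element statistics; the only point requiring mild care is confirming that the LR-maxima and RL-minima sets genuinely partition (or at least cover) all positions when every element is one or the other, which follows immediately from the characterization.
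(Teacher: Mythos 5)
Your proof is correct. Note that the paper itself offers no proof of this lemma at all --- it simply attributes the observation to Burstein's paper on permutation tableaux --- so your direct argument by contraposition in both directions is a welcome, self-contained substitute. All three steps check out: the middle entry of any $321$-occurrence is neither a LR-maximum nor an RL-minimum; an entry that is neither witnesses a $321$-pattern with one larger entry to its left and one smaller to its right; and the LR-maxima and RL-minima each form increasing subsequences (if $\pi_i,\pi_j$ are both LR-maxima with $i<j$ then $\pi_i<\pi_j$, and likewise for RL-minima), while the pigeonhole argument rules out a $321$-pattern in any union of two nondecreasing subsequences. The only cosmetic remark is that the lemma's phrasing ``identity or a union of two nondecreasing subsequences'' is handled implicitly in your argument by allowing one of the two subsequences to be empty, which is worth saying explicitly but costs nothing.
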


\begin{coro}\label{coro:rlm01}
Given $\pi \in S_n(321)$, if $\pi_1=1$, then $\rlm(\pi)=0$. Otherwise,
$\rlm(\pi)=1$.
\end{coro}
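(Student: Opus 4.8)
The plan is to prove Corollary~\ref{coro:rlm01} directly from the structural characterization in Lemma~\ref{lem:321} together with the definition of $\rlm$. Recall that $\RLM(\pi)$ is the set of RL-maxima of the subword of $\pi$ lying strictly to the left of the entry $1$, and $\rlm(\pi)=|\RLM(\pi)|$. The key observation I would use is that since $\pi \in S_n(321)$, Lemma~\ref{lem:321} tells me every element is either an LR-maximum or an RL-minimum.

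First I would dispose of the case $\pi_1 = 1$. Here the entry $1$ sits in position $1$, so the subword of $\pi$ to the left of $1$ is empty. Hence there are no RL-maxima to count and $\rlm(\pi) = 0$ immediately.

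For the case $\pi_1 \neq 1$, I would argue that the subword to the left of $1$ has exactly one RL-maximum, namely $\pi_1$. The plan is as follows. Let $1$ appear in position $m > 1$, and consider the prefix $\pi_1 \pi_2 \cdots \pi_{m-1}$. Since $1$ appears after this prefix, every entry in the prefix exceeds the entry $1$ that follows it, so if any entry $\pi_j$ (with $j<m$) were \emph{not} the largest among $\pi_j, \ldots, \pi_{m-1}$, say $\pi_j > \pi_i$ for some $j < i < m$, then $\pi_j \pi_i 1$ would form a $321$-pattern, contradicting $321$-avoidance. Thus within the prefix, the RL-maxima are precisely the entries that are left-to-right weakly decreasing; but $321$-avoidance forces the prefix entries that are RL-minima to be very constrained. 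A cleaner route: by Lemma~\ref{lem:321} each $\pi_j$ in the prefix is an LR-maximum or an RL-minimum of $\pi$, and since each prefix entry is larger than the $1$ to its right, none of them can be an RL-minimum of $\pi$; hence every prefix entry is an LR-maximum of $\pi$, which forces $\pi_1 < \pi_2 < \cdots < \pi_{m-1}$. Consequently the unique RL-maximum of the prefix is its last element $\pi_{m-1}$, giving $\rlm(\pi) = 1$.

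The main subtlety to get right is the bookkeeping distinction between being an RL-maximum of the prefix subword (which is what $\RLM$ counts) versus being an LR-maximum of all of $\pi$ (which is what Lemma~\ref{lem:321} supplies); I want to make sure I correctly translate the global structure into the local count. Once I establish that the prefix is strictly increasing, concluding that it contributes exactly one RL-maximum is immediate, so this translation step is the only place where care is needed. No involved computation is required; the result follows from the $321$-avoidance structure and a short case analysis on whether $\pi_1 = 1$.
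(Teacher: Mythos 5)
Your argument is correct and matches the paper's (the paper gives no explicit proof, asserting that the corollary ``follows obviously'' from Lemma~\ref{lem:321}, which is exactly the route your ``cleaner'' version takes: every entry left of $1$ exceeds the $1$ to its right, so it cannot be an RL-minimum of $\pi$, hence is an LR-maximum, so the prefix is increasing and has a unique RL-maximum). The only blemish is your first, abandoned attempt, where ``$\pi_j$ is not the largest among $\pi_j,\ldots,\pi_{m-1}$'' is conflated with ``$\pi_j>\pi_i$ for some $i>j$''; the inversion-plus-$1$ argument is fine, but as stated that sentence has the inequality backwards, and you rightly discard it.
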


The following lemma can be easily checked. And then, we are prepared to
give a proof of Theorem \ref{thm:sn321}.
\begin{lem}\label{lem:1n}
 Assume that $\pi=\pi_1 \pi_2 \cdots \pi_n \in S_n(321)$ with $\pi_1 \neq 1$ and $\pi_n \neq n$. Let $\sigma=\pi^{rc}$, then $\rlm(\pi)=\rlm(\sigma)=1$
 and
 \[(\rlmin, \lrmax, \des,\ides) \pi
 =(  \lrmax, \rlmin, \des,\ides) \sigma.
 \]
\end{lem}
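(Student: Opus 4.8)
The statement asserts that for $\pi \in S_n(321)$ with $\pi_1 \neq 1$ and $\pi_n \neq n$, setting $\sigma = \pi^{rc}$, we have $\rlm(\pi) = \rlm(\sigma) = 1$ together with the swap of $\rlmin$ and $\lrmax$ (while $\des$ and $\ides$ are preserved). The plan is to treat the $\rlm$ claim and the four-statistic identity separately, leaning on the fact that the $rc$ operation (reverse then complement) is a very well-understood symmetry of $S_n$.

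First I would dispose of the $\rlm$ values. Since $\pi_1 \neq 1$, Corollary \ref{coro:rlm01} immediately gives $\rlm(\pi) = 1$. For $\sigma = \pi^{rc}$, I would first note that $rc$ preserves the class $S_n(321)$: reversing and complementing sends a $321$-pattern to a $321$-pattern, so $\sigma \in S_n(321)$ as well. Then I need $\sigma_1 \neq 1$; but $\sigma_1 = \pi^{rc}(1) = n+1 - \pi(n)$, and the hypothesis $\pi_n \neq n$ forces $\sigma_1 \neq 1$. Hence Corollary \ref{coro:rlm01} applies again to give $\rlm(\sigma) = 1$.

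Next I would establish the four-statistic identity by tracking how $r$ and $c$ act on each of $\rlmin$, $\lrmax$, $\des$, and $\ides$. The key elementary observations are: complementation $c$ exchanges maxima with minima and fixes descents (since $\pi_i > \pi_{i+1}$ iff $\pi^c_i < \pi^c_{i+1}$ is the wrong direction—rather $c$ turns descents into ascents, so I must be careful and use the composite $rc$ throughout); reversal $r$ exchanges left-to-right with right-to-left reading. Concretely, an index is an RL-minimum of $\pi$ exactly when the mirrored/complemented position is an LR-maximum of $\pi^{rc}$, which yields $\rlmin(\pi) = \lrmax(\sigma)$ and symmetrically $\lrmax(\pi) = \rlmin(\sigma)$. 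For the descent number, the combined effect of reversing and complementing is to preserve $\des$ (each of $r$ and $c$ individually sends $\des$ to $\asc$, so their composite returns to $\des$). For $\ides = \des \circ \inver$, I would use the standard commutation relations between inverse and the $r,c$ symmetries—namely $(\pi^{rc})^{-1} = (\pi^{-1})^{cr}$ or the analogous identity—to reduce $\ides(\sigma)$ to $\des$ of a reverse-complement of $\pi^{-1}$, and then invoke the same $\des$-preservation fact.

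The main obstacle, and the only place demanding genuine care, is bookkeeping the signs and directions of the symmetries: $r$ and $c$ each individually flip the descent/ascent dichotomy and each individually flips the max/min and left/right dichotomies, so one must verify that the specific composite $rc$ produces precisely the claimed pairing rather than some other combination. I would handle this by writing out the action on positions and values explicitly for a single statistic (say $\rlmin \to \lrmax$) and then citing the parallel computations for the remaining three, since these are all standard and routine once the first is pinned down. The lemma is flagged in the text as easily checked, so a clean verification of the $\rlmin$–$\lrmax$ exchange plus the remark that $\des$ and $\ides$ are invariant under $rc$ should suffice.
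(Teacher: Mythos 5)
Your proposal is correct. The paper offers no proof of this lemma (it is dismissed as ``easily checked''), and your verification --- $\rlm(\pi)=\rlm(\sigma)=1$ via Corollary \ref{coro:rlm01} after noting that $rc$ preserves $S_n(321)$ and that $\sigma_1=n+1-\pi_n\neq 1$, the exchange $\rlmin\leftrightarrow\lrmax$ from the position/value mirroring, invariance of $\des$ because $r$ and $c$ each swap descents with ascents, and invariance of $\ides$ via $(\pi^{rc})^{-1}=(\pi^{-1})^{rc}$ --- is exactly the intended routine argument; only the momentarily garbled parenthetical about how $c$ acts on descents needs cleaning up, and you correct it yourself in the same sentence.
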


\begin{proof}[Proof of Theorem~\ref{thm:sn321}]
It suffices to construct a bijection $\chi$ over $S_n(321)$,
which maps  $(\rlm, \rlmin, \lrmax, \des,\ides)$
to $(\rlm, \lrmax, \rlmin, \des,\ides).$

Assume that $\pi=\pi_1 \pi_2 \cdots \pi_n$ is  $321$-avoiding.
If $\pi_1 = 1$, we set $l = \max\{j \,|\, \pi_1=1, \cdots, \pi_j=j\}$, otherwise,  $l=0$.
If $\pi_n = n$, we set $r = \min\{j\, |\, \pi_j=j, \cdots, \pi_n=n\}$,
otherwise, $r=n+1$.

Let $p=p_1 \cdots
p_{n}= \chi(\pi)$ be the permutation  $1\,2\cdots l \,h
\,r \, r+1 \cdots n,$ where
\[h=\st((n+1-\pi_{r-1}) (n+1-\pi_{r-2}) \cdots (n+1-\pi_{l+1}),\{l+1,\cdots, r-1\}).\]
It should be noted that $12\cdots l$ is assumed to be empty if $l=0$,
while $r \, r+1 \cdots n$  is assumed to be empty if $r=n+1$.
Clearly, we have $p \in S_n(321)$.  Based on  Lemma \ref{lem:1n}, it can be  easily verified that
\[(\rlmin, \lrmax, \des,\ides)(\pi)=
(\lrmax, \rlmin, \des,\ides)(p).\]
By Corollary \ref{coro:rlm01}, we have
$\rlm(p)=\rlm(\pi)=0$ if $l=0$, while $\rlm(p)=\rlm(\pi)=1$ if $l>0$.
This completes the proof.
\end{proof}

\begin{example}
Let $\pi=123468759$, then $l=4$, $r=9$ and $h=\st(5324,\{5,$ $6,7,8\})=8657$. Thus we have $p=\chi(\pi)=123486579$.
It is easy to check that $\rlmin(\pi)=\lrmax(p)=6$, $\lrmax(\pi)=\rlmin(p)=7$,  $\des(\pi)=\des(p)=2$ and
$\ides(\pi)=\ides(p)=2$.
\end{example}

In the remaining part of this section, we are dedicated to proving Theorem \ref{thm:sn}.
Given a permutation $\pi$, put a bar after each RL-minimum,  and then
put a bar before  each LR-maximum if there is no bar before it. Thus we obtain a block decomposition
of $\pi$. Write the block decomposition of $\pi$ as $B_1 B_2 \cdots B_k$, we define
\begin{eqnarray*}
\N(\pi) &=&\{B_i\,| B_i~ \text{contains  neither LR-maximum nor RL-minimum} \},\\[5pt]
\T(\pi)&=& \{B_i\,| B_i~ \text{contains  both LR-maximum and RL-minimum} \},\\[5pt]
\A(\pi)&=& \{B_i\,| B_i~ \text{contains  a LR-maximum and no RL-minimum} \},\\[5pt]
\I(\pi)&=& \{B_i\,| B_i~ \text{contains  an RL-minimum and no LR-maximum} \}.
\end{eqnarray*}
%It should be mentioned that a letter of $\pi$ can be both LR-maximum %and RL-minimum.
For convenience, we call a block in $N(\pi)$ a N-block. The T-block,
A-block and I-block are defined similarly. Propositions of the block
decomposition below can be easily verified.

\begin{prop}\label{prop:block}
For any $\pi \in S_n$, write $\pi=B_1 B_2 \cdots B_k$, we have
\begin{enumerate}
  \item $|T(\pi)| \geq 1$.
  \item $N(\pi) \cup T(\pi) \cup A(\pi)  \cup I(\pi) = \bigcup_{1 \leq i \leq k} \{B_i\}$.
  %\item If $B_i \in \N(\pi)$, then $B_{i-1} \in \T(\pi) \cup \I(\pi) $ and  $B_{i+1} \in \T(\pi) \cup \A(\pi)$.
  \item If $B_i \in \N(\pi)$, then there exist integers $j<i$ and  $h>i$ such that $B_{j} \in \T(\pi)$, $B_{h} \in \T(\pi)$,
       $\{B_{j+1}, \cdots, B_{i-1}\} \subset \I(\pi)$ and
       $\{B_{i+1}, \cdots, B_{h-1}\} \subset \A(\pi)$.
  \item Let $\T(\pi) \cup \A(\pi)=\{B_{x_1},\cdots,B_{x_h}\}$, where
  $x_1 < \cdots <x_h$, then
  \[\max(B_{x_1})< \cdots < \max(B_{x_h}).\]
  \item Let $\T(\pi) \cup \I(\pi)=\{B_{x_1},\cdots,B_{x_h}\}$, where
  $x_1 < \cdots <x_h$, then
  \[\min(B_{x_1})< \cdots < \min(B_{x_h}).\]
\end{enumerate}
\end{prop}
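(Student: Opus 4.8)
The plan is to verify each of the five assertions directly from the definitions of the block decomposition, since Proposition~\ref{prop:block} merely records structural facts rather than anything deep. Recall that the decomposition is built by inserting a bar \emph{after} each RL-minimum and a bar \emph{before} each LR-maximum (when none is already present). The first step is to understand what separates consecutive blocks: every bar is placed either right after an RL-minimum or right before an LR-maximum, so every block boundary is ``anchored'' by at least one of these two distinguished elements. From this observation, part (2) is immediate: the four sets $\N,\T,\A,\I$ are defined by the four possible combinations of ``contains an LR-maximum or not'' with ``contains an RL-minimum or not,'' so they partition $\{B_1,\dots,B_k\}$ by construction and their union is the whole set.

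For part (1), I would note that $\pi_1$ is always an LR-maximum and $\pi_n$ is always an RL-minimum (vacuously, since there is nothing to the left of $\pi_1$ or to the right of $\pi_n$). The key point is then that the \emph{last} block must contain both: the rightmost RL-minimum is $\pi_n$ itself, and I would argue that the final block starting after the last internal bar contains $\pi_n$ together with an LR-maximum, forcing $|\T(\pi)|\ge 1$. A clean way to see this is to trace where bars can fall near the end of $\pi$ and check that no block of the last position can be purely of type $\A$, $\I$, or $\N$. Part (3) is the most delicate bookkeeping: an $\N$-block $B_i$ contains neither distinguished element, so neither of its endpoints triggered a bar on its own; I would analyze why such a block can only be sandwiched between a run of $\I$-blocks (carrying RL-minima) on the left and a run of $\A$-blocks (carrying LR-maxima) on the right, bounded on the outside by $\T$-blocks. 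The argument rests on the fact that an LR-maximum and an RL-minimum interleave in a controlled way along the permutation, so that once the ``LR-maximum supply'' and ``RL-minimum supply'' both lapse, we are in an $\N$ stretch pinned between them.

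Parts (4) and (5) are the monotonicity statements, and these follow from elementary properties of LR-maxima and RL-minima rather than from the block structure itself. For part (4), I would use that the successive LR-maxima of $\pi$, read left to right, form a strictly increasing sequence by definition; since each block in $\T\cup\A$ is exactly a block containing an LR-maximum, and the maximum entry of such a block is an LR-maximum (indeed the largest entry seen so far), ordering the blocks by position orders their maxima increasingly. Part (5) is the mirror image using RL-minima read right to left, giving the increasing order of the block minima for $\T\cup\I$.

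The step I expect to be the main obstacle is part (3), because it is the only assertion requiring a genuine interleaving argument rather than a one-line appeal to the definition: I must rule out, for instance, an $\A$-block appearing to the left of an $\N$-block or an $\I$-block to its right, and confirm the outer $\T$-blocks exist. I would handle this by carefully classifying, for the maximal run of non-$\T$ blocks containing a given $\N$-block, what each neighboring block's distinguished content can be, using that a new LR-maximum can only increase the running max and a new RL-minimum can only decrease the running min as we scan outward. Everything else reduces to restating the definitions, so the write-up can dispatch (1), (2), (4), (5) quickly and spend its effort on (3).
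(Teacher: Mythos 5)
The paper gives no written proof of this proposition (it is dismissed as ``easily verified''), so your proposal has to stand on its own. Your overall framework is sound: the observation that every internal bar is anchored either by an RL-minimum ending the block to its left or by an LR-maximum beginning the block to its right is exactly the engine needed for parts (2) and (3), and parts (4), (5) do reduce to the left-to-right monotonicity of LR-maxima and the right-to-left monotonicity of RL-minima, provided you add the small missing observation that a block in $\T(\pi)\cup\A(\pi)$ contains exactly one LR-maximum, which is necessarily both its first letter and its largest letter (if a larger letter occurred later in the block, the largest letter of the block would itself be an LR-maximum and would have forced an extra bar inside the block).

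However, your argument for part (1) fails. You claim the last block must contain both an LR-maximum and an RL-minimum; it need not contain an LR-maximum. In the paper's own example $\pi=10,2,6,11,1,8,13,3,5,9,4,12,7$ the last block is $12\,7$, an I-block, since $12$ is not an LR-maximum ($13$ precedes it). The only endpoint facts that hold in general are $B_1\in\T(\pi)\cup\A(\pi)$ and $B_k\in\T(\pi)\cup\I(\pi)$. To prove $|\T(\pi)|\ge 1$, look instead at the block containing the letter $n$: since $n$ is an LR-maximum, that block begins with $n$; since no letter after $n$ is an LR-maximum, no bar is inserted before any later letter of that block, so the block extends to the first RL-minimum weakly to the right of $n$, which exists because $\pi_n$ is an RL-minimum; hence this block contains both an LR-maximum and an RL-minimum and lies in $\T(\pi)$. (Dually, the block ending at the letter $1$ works.) With this replacement, and with part (3) actually carried out via the ``anchored bar'' analysis you sketch, the proof goes through.
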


  In the following, we define two operations on permutations. Given $\pi=\pi_1 \pi_2 \cdots \pi_n \in S_n$, assume that $\min(\pi)=\pi_i$ and $\max(\pi)=\pi_j$, let
 \begin{eqnarray*}
 % \nonumber to remove numbering (before each equation)
   L(\pi) &=& \pi_{i+1} \cdots \pi_n \pi_1 \cdots \pi_i, \\[3pt]
   R(\pi) &=& \pi_j \cdots \pi_n \pi_1 \cdots \pi_{j-1}.
 \end{eqnarray*}

 \begin{prop}\label{prop:LR}
 For $\pi=\pi_1 \cdots \pi_n \in S_n$, we have
 \begin{enumerate}
   \item $R \circ L(\pi)=\pi$ if and only if $\max(\pi)=\pi_1$.
   \item $L \circ R(\pi)=\pi$ if and only if $\min(\pi)=\pi_n$.
   \item If $\max(\pi)=\pi_1$, then $(\underline{321})(\pi)=(\underline{321})(L(\pi))$.
   \item If $\min(\pi)=\pi_n$, then $(\underline{321})(\pi)=(\underline{321})(R(\pi))$.
 \end{enumerate}
 \end{prop}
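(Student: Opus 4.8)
The plan is to prove Proposition~\ref{prop:LR} by treating the four items essentially in pairs, since items~2 and~4 are the mirror images of items~1 and~3 under the complement (or reverse-complement) symmetry that exchanges the roles of $\max$ and $\min$. So I would focus the real work on items~1 and~3 and then indicate that~2 and~4 follow by the analogous argument. Everything here is a direct unwinding of the definitions of the cyclic-rotation operators $L$ and $R$, so I expect each verification to be short.

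For item~1, recall that $L(\pi)$ rotates $\pi$ so that the element immediately \emph{after} the minimum becomes the new first letter, i.e. $L(\pi)=\pi_{i+1}\cdots\pi_n\pi_1\cdots\pi_i$ where $\pi_i=\min(\pi)$; thus in $L(\pi)$ the minimum sits in the last position. Correspondingly $R$ rotates so that the maximum is brought to the front. First I would compute $R\circ L(\pi)$ directly: applying $L$ places $\min(\pi)=\pi_i$ at the end of $L(\pi)$, and then applying $R$ to $L(\pi)$ cyclically rotates the maximum to the front. The key observation is that both $L$ and $R$ are cyclic rotations, so $R\circ L$ is again a cyclic rotation of $\pi$, and it equals the identity rotation (returning $\pi$) exactly when the maximum of $\pi$ is already in position~$1$. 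Concretely, $R$ brings $\max$ to the front of $L(\pi)$, which recovers $\pi$ precisely when $\max(\pi)=\pi_1$; conversely if $\max(\pi)\neq\pi_1$ the composite rotation is nontrivial. I would verify the forward and backward directions by tracking which letter lands in position~$1$ after the composite rotation. Item~2 is identical after swapping $\max\leftrightarrow\min$ and $L\leftrightarrow R$ (equivalently, conjugating by the complement map $\comple$).

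For item~3, the point is that $L(\pi)$ is a cyclic rotation whose cut is made right after the minimum of $\pi$, and when additionally $\max(\pi)=\pi_1$ this cyclic shift does not create or destroy any occurrence of the vincular pattern $\underline{321}$. The reason is that an occurrence of $\underline{321}$ is a descent pair $\pi_t>\pi_{t+1}$ together with some later letter smaller than $\pi_{t+1}$; the only adjacency that the rotation breaks is the wrap-around between position $i$ (holding $\min(\pi)$) and position $i+1$, but since $\min(\pi)$ is the globally smallest letter it can never serve as the \emph{first or middle} element of a $\underline{321}$, and the newly adjacent pair $\pi_n,\pi_1$ created at the seam has $\pi_1=\max(\pi)$ as its left element, which likewise cannot be completed to a $\underline{321}$ to its right in a way that was not already counted. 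Thus the multiset of descent-based occurrences is preserved. Item~4 follows from item~3 by the same complement symmetry. The main obstacle will be item~3: unlike the purely structural rotation bookkeeping in items~1 and~2, here I must argue carefully that every $\underline{321}$-occurrence in $\pi$ corresponds bijectively to one in $L(\pi)$, paying attention to the two adjacencies altered by the seam (the broken adjacency at the old minimum and the new adjacency joining $\pi_n$ to $\pi_1=\max(\pi)$), and confirming that neither endpoint letter can participate as the required consecutive descent pair in a spurious or missing occurrence.
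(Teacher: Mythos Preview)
The paper states Proposition~\ref{prop:LR} without proof, so your plan of simply unwinding the definitions is exactly what is called for, and your treatment of items~1 and~2 (cyclic rotations, track where the maximum lands) is fine.

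There is, however, a genuine error in your handling of item~3: you have misread the vincular pattern. In $\underline{321}$ the underline covers all three letters, so an occurrence is a triple of \emph{consecutive} positions $t,t{+}1,t{+}2$ with $\pi_t>\pi_{t+1}>\pi_{t+2}$; it is not ``a descent pair $\pi_t>\pi_{t+1}$ together with some later letter smaller than $\pi_{t+1}$'' (that would be $\underline{32}1$). This matters because under your interpretation the statement is actually false: take $\pi=6\,3\,1\,5\,4\,2$, so $\pi_1=\max(\pi)$ and $L(\pi)=5\,4\,2\,6\,3\,1$; one counts three occurrences of $\underline{32}1$ in $\pi$ but five in $L(\pi)$. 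Under the correct reading the count is $2$ in both.

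With the correct interpretation the argument is even cleaner than what you wrote. Only two consecutive triples of $\pi$ are broken by $L$, namely those straddling positions $i,i{+}1$, i.e.\ $(\pi_{i-1},\pi_i,\pi_{i+1})$ and $(\pi_i,\pi_{i+1},\pi_{i+2})$; since $\pi_i=\min(\pi)$, neither can be decreasing. Only two new consecutive triples are created at the seam of $L(\pi)$, namely $(\pi_{n-1},\pi_n,\pi_1)$ and $(\pi_n,\pi_1,\pi_2)$; since $\pi_1=\max(\pi)$, neither can be decreasing. (A small slip in your write-up: at the seam $\pi_n$ is on the \emph{left} and $\pi_1=\max(\pi)$ on the \emph{right}, not the other way around; the point is simply that $\pi_n<\pi_1$, so the seam is an ascent.) Item~4 then follows by the symmetry you indicated.
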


  Now we are ready to present the map $\varphi$
  over $S_n$ such that for any $\pi\in S_n$
  \[(\rlm, \rlmin, \wnm, \des,(\underline{321})) \pi = (\rlm, \wnm, \rlmin, \des,(\underline{321})) \varphi(\pi).\]
  Let $\pi= B_1 B_2 \cdots B_k \in S_n$ and assume that
  \begin{equation}\label{equ:block}
  \begin{array}{l}
  N(\pi)=\,\, \{B_{N_1}, \cdots, B_{N_h}\}, \,\,\,T(\pi)=\,\, \{B_{T_1}, \cdots, B_{T_l}\},  \\[6pt]
   A(\pi)= \,\,\{B_{A_1}, \cdots, B_{A_p}\}, \,\,\,\,I(\pi)= \,\,\{B_{I_1}, \cdots, B_{I_q}\}.
  \end{array}
  \end{equation}
  %\begin{eqnarray}\label{equ:block}
%  % \nonumber to remove numbering (before each equation)
%    N(\pi)&=\,\, \{B_{N_1}, \cdots, B_{N_h}\}, \,\,\,T(\pi)&=\,\, \{B_{T_1}, \cdots, B_{T_l}\}, \notag \\
%     A(\pi)&= \,\,\{B_{A_1}, \cdots, B_{A_p}\}, \,\,\,\,\,I(\pi)&= \,\,\{B_{I_1}, \cdots, B_{I_q}\}.
%  \end{eqnarray}
  If $N(\pi)=\emptyset$, then we may view $h=0$. It is similar for
  $A(\pi)$ and $ I(\pi)$.
  We can obtain $\sigma = \varphi(\pi)$  through the following there steps:
 \begin{description}
   \item[Step 1] Write down the blocks in $N(\pi)$ and $\T(\pi)$, which keeps the relative order in $\pi$, we obtain $\sigma'$;
   \item[Step 2] Insert $R(B_{I_1}), \cdots, R(B_{I_q})$ to $\sigma'$
                 by letting the maximal letter (i.e. the first letter)
                 of $R(B_{I_1}), \cdots, R(B_{I_q}), B_{T_1}, \cdots, B_{T_l}$ increase. Between two T-blocks,
                 $R(B_{I_c})(1 \leq c \leq q)$ is always to the right
                 of a N-block, if there is any. Then we obtain $\sigma''$;

   \item[Step 3] Insert $L(B_{A_1}), \cdots, L(B_{A_p})$ to $\sigma''$
                 by letting the minimal letter (i.e. the last letter)
                 of $L(B_{A_1}), \cdots, L(B_{A_p}), B_{T_1}, \cdots, B_{T_l}$ increase. Between two T-blocks,
                 $L(B_{A_d})(1 \leq d \leq p)$ is always to the left
                 of a N-block and $R(B_{A_c})(1 \leq c \leq q)$, if there is any. Then we obtain $\sigma$.
 \end{description}

\begin{example}
Let $\pi=10,2,6,11,1,8,13,3,5,9,4,12,7$, then the block decomposition
of $\pi$ is
\[
\big{|}10\,\,\,\,\,\,\,2\,\,\,\,\,\,\,6\,\,\,\,\,\,\,\big{|}11\,\,\,\,\,\,\,
1\big{|}
\,\,\,\,\,\,\,8\,\,\,\,\,\,\,\big{|}13
\,\,\,\,\,\,\,3\big{|}\,\,\,\,\,\,\,5
\,\,\,\,\,\,\,9\,\,\,\,\,\,\,4\big{|}\,\,\,\,\,\,\,12\,\,\,\,\,\,\,7\big{|}
\]
and $N(\pi)=\{8\}, T(\pi)= \{  11\,\, 1, 13\,\,3\},
     A(\pi)= \{10 \,\,2\,\,6\}, I(\pi)= \{5 \,\,9\,\,4, 12\,\,7\}.$
By the three steps given above, we have
\begin{eqnarray*}
% \nonumber to remove numbering (before each equation)
  \sigma'&=&11\,\,1,8,13\,\,3 , \hskip 0.5cm
  \sigma''=9\,\,4\,\,5, 11\,\,1, 8, 12\,\,7, 13\,\,3  \\
  \sigma&=&9\,\,4\,\,5, 11\,\,1, 6\,\,10\,\,2,  8, 12\,\,7, 13\,\,3
\end{eqnarray*}

 % \[\sigma_1=11\,\,1,8,13\,\,3 ,
%  \sigma_2=9\,\,4\,\,5, 11\,\,1, 8, 12\,\,7, 13\,\,3
%
%  \sigma=9\,\,4\,\,5, 11\,\,1, 6\,\,10\,\,2,  8, 12\,\,7, 13\,\,3
%\]
\end{example}

\begin{prop}\label{prop:newblock}
 Let $\sigma= \varphi(\pi)$, we have
\begin{enumerate}
  \item[(1)] $T(\sigma)=\{B_{T_1}, \cdots, B_{T_l}\}$;
  \item[(2)] $A(\sigma)=\{R(B_{I_1}), \cdots, R(B_{I_q})\}$;
  \item[(3)] $I(\sigma)=\{L(B_{A_1}), \cdots, L(B_{A_p})\}$;
  \item[(4)] $N(\sigma)=\{B_{N_1}, \cdots, B_{N_h}\}$.
\end{enumerate}
\end{prop}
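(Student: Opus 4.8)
The plan is to reduce everything to a description of the LR-maxima and RL-minima of $\sigma$, from which the bar positions, and hence the blocks of $\sigma$ and their types, can be read off. First I would record the fine structure of a single block: since a bar is placed after every RL-minimum and before every LR-maximum, a block contains at most one LR-maximum and at most one RL-minimum, the former (when present) being its first entry and the latter (when present) its last entry. A short record argument then shows that such a leading LR-maximum is the maximum of the whole block and such a trailing RL-minimum is its minimum; for instance, if a later entry of the block exceeded the leading LR-maximum it would itself be an LR-maximum and force an internal bar, a contradiction, and dually for the minimum. In particular, for an $I$-block $B_{I_c}$ the rotation $R(B_{I_c})$ begins with $\max(B_{I_c})$, and for an $A$-block $B_{A_d}$ the rotation $L(B_{A_d})$ ends with $\min(B_{A_d})$, while the opposite extreme of each rotated block is pushed into its interior.

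The heart of the matter is the following claim: in $\sigma$ the LR-maxima are exactly the maxima of the $T$-blocks and the $I$-blocks of $\pi$, and the RL-minima are exactly the minima of the $T$-blocks and the $A$-blocks of $\pi$. For the ``these are records'' direction I would use Step~2 together with Proposition~\ref{prop:block}(4): the first entries of the pieces $B_{T_j}$ and $R(B_{I_c})$ are precisely these maxima, they are arranged left to right in increasing order, and each such first entry equals the maximum of its own piece, so it dominates every entry lying in a piece to its left; granting that the $N$-block entries inserted in Step~3 are likewise dominated, each of these values is an LR-maximum. The statement for RL-minima is the mirror image, obtained from Step~3 and Proposition~\ref{prop:block}(5).

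For the converse I must show that nothing else is a record, and this is where the placement rules do the work. A displaced $A$-block maximum $M=\max(B_{A_d})$ sits in the interior of $L(B_{A_d})$, and I would exhibit a strictly larger entry to its left in $\sigma$. Since the global maximum $n$ lies in a $T$-block (the rightmost LR-maximum heads a block which, by the record argument, must also end in an RL-minimum), we have $M<n$; I would then produce a $T$-block whose maximum exceeds $M$ and whose minimum lies below $\min(B_{A_d})$, and check that the rule ``between two $T$-blocks, $L(B_{A_d})$ stays to the left of the $N$-block and of the $R(B_{I_c})$'', together with the increasing arrangement of trailing minima in Step~3, places that $T$-block to its left. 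Symmetrically, a displaced $I$-block minimum acquires a strictly smaller entry to its right and so is not an RL-minimum, and each $N$-block entry, being squeezed between the extremes of its surrounding $T$-blocks (Proposition~\ref{prop:block}(3)), is neither.

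Granting the claim, the proposition follows: the bars of $\sigma$ fall after each $T$- or $A$-block minimum and before each $T$- or $I$-block maximum, which are exactly the boundaries of the listed pieces and leave no bar interior to any piece; hence $B_{N_i}, B_{T_j}, R(B_{I_c}), L(B_{A_d})$ are precisely the blocks of $\sigma$. Reading off first and last entries then gives the types: $B_{T_j}$ retains both records (a $T$-block), $R(B_{I_c})$ has a leading LR-maximum and no trailing RL-minimum (an $A$-block), $L(B_{A_d})$ has a trailing RL-minimum and no leading LR-maximum (an $I$-block), and $B_{N_i}$ has neither (an $N$-block), which are items (1)--(4). I expect the main obstacle to be this converse direction, and in particular producing, for each $A$-block, a $T$-block with larger maximum and smaller minimum: this seems to require combining the suffix-minimum lying after $B_{A_d}$ (which is an RL-minimum, so heads a $T$- or $I$-block) with Proposition~\ref{prop:block}(3)--(5) via a short case analysis, while simultaneously controlling the $N$-block entries. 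A possible shortcut worth attempting is the reverse--complement symmetry, which swaps $L\leftrightarrow R$, $A\leftrightarrow I$ and LR-maxima $\leftrightarrow$ RL-minima and would let one deduce the RL-minimum half from the LR-maximum half; since Step~3 is applied after Step~2 this may fail to be an exact duality, so I would first settle the structure of $\sigma''$ (after Step~2) in isolation and then treat the Step~3 insertion as its mirror.
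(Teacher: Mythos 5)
Your proposal is correct and follows essentially the same route as the paper: both arguments identify the LR-maxima of $\sigma$ as the maxima of the $T$- and $I$-blocks and the RL-minima as the minima of the $T$- and $A$-blocks, using the increasing arrangement of leading maxima (Step 2) and trailing minima (Step 3) for the positive direction, and the ``nearest $T$-block'' with larger maximum and smaller minimum to rule out displaced records and $N$-block entries. The paper resolves the converse exactly as you anticipate, so no further comparison is needed.
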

\begin{proof}
Firstly, we wish to show that the first letter of $B_{T_j}$(i.e.$\max\{B_{T_j}\}$), where
$1 \leq j \leq l$,  is a LR-maximum of $\sigma$, while
the last letter of $B_{T_j}$ (i.e.$\min\{B_{T_j}\}$) is an RL-minimum of $\sigma$.
By definition of step $1$ in the description of
$\varphi$, we easily check that the first letter of $B_{T_j}$ is a LR-maximum of $\sigma'$.
Since the maximal letter  $R(B_{I_1}), \cdots, R(B_{I_q}), B_{T_1}, \cdots, B_{T_l}$ increase in step $2$,  we have the first letter of $B_{T_j}$ is a LR-maximum of $\sigma''$.
Assume that $L(B_{A_i})$ is to the left of $B_{T_j}$
in $\sigma$, then we have $\min\{B_{A_i}\}< \min\{B_{T_j}\}$.
It follows that $B_{A_i}$ is to the left of $B_{T_j}$ in $\pi$,
which means that $\max\{B_{A_i}\}< \max\{B_{T_j}\}$.
Above all, the first letter of $B_{T_j}$  is a LR-maximum of $\sigma$.

Now we proceed to show that  the last letter of
$B_{T_j}$  is an RL-minimum of $\sigma$.
Clearly, $\min\{B_{T_j}\}$ is an RL-minimum of $\sigma'$.
the maximal letter (i.e. the first letter)
of $R(B_{I_1}), \cdots, R(B_{I_q}), B_{T_1}, \cdots, B_{T_l}$ increase.

Assume that $R(B_{I_i})$ is to the right of $B_{T_j}$ in $\sigma$,
then $\max\{B_{T_j}\}< \max\{B_{I_i}\}$. It means that
$B_{I_i}$ is to the right of $B_{T_j}$ in $\pi$.
Hence, $\min\{B_{I_i}\} > \min\{B_{T_j}\}$.
Assume that $L(B_{A_i})$ is to the right of $B_{T_j}$ in $\sigma$,
then by the definition of step $3$, it is easily seen that
$\min\{B_{A_i}\} > \min\{B_{T_j}\}$. Hence, the last letter of
$B_{T_j}$  is an RL-minimum of $\sigma$, as desired.

Secondly, we need to show that
the first letter of $R(B_{I_j})$(i.e.$\max\{B_{I_j}\}$),
where $1 \leq j \leq q$,
is a LR-maximum of $\sigma$ and it contains no
RL-minimum of $\sigma$.
Clearly, the first letter of $R(B_{I_j})$
is a LR-maximum of $\sigma''$. Assume that
$L(B_{A_i})$ is to the left of $R(B_{I_j})$ in $\sigma$,
we wish to prove that $\max\{B_{A_i}\}< \max\{B_{I_j}\}$.
Let $B_{T_x}$ be the nearest T-block that
is to the right of $B_{A_i}$ in $\pi$, then we
have $\max\{B_{A_i}\}< \max\{B_{T_x}\}$ and $\min\{B_{A_i}\}> \min\{B_{T_x}\}$. By step $3$ in the description of
$\varphi$, $B_{T_x}$ is to the left of $L(B_{A_i})$, and
hence to the left of $R(B_{I_j})$ in $\sigma$.
It follows that $\max\{B_{T_x}\}< \max\{B_{I_j}\}$.
Thus, $\max\{B_{A_i}\}<\max\{B_{I_j}\}$.
Hence, the first letter of $R(B_{I_j})$
is a LR-maximum of $\sigma$.

Let $B_{T_y}$ be the nearest T-block that
is to the left of $B_{I_j}$ in $\pi$, then we
have $\max\{B_{I_j}\}< \max\{B_{T_y}\}$ and $\min\{B_{I_j}\}> \min\{B_{T_y}\}$.
By step $2$, $B_{T_y}$ is to the right of $R(B_{I_j})$ in $\sigma$.
It follows from the fact $\min\{B_{I_j}\}> \min\{B_{T_y}\}$ that
$R(B_{I_j})$ contains no RL-minimum of $\sigma$, as desired.

Thirdly, we wish to show that
the last letter of $L(B_{A_j})$$(1 \leq j \leq p)$
is an RL-minimum of $\sigma$ and it contains no
LR-maximum of $\sigma$.
By description of step $3$ in $\varphi$, the last
letter of $L(B_{A_j})$ (i.e.$\min\{B_{A_j}\}$)
is smaller than all letters of $T$-blocks and $N$-blocks
which are to the right of it.
Now we assume that $R(B_{I_i})$ is to the right of $L(B_{A_j})$ in $\sigma$, if
there is any,
we aim to show that $\min\{B_{I_i}\}>\min\{B_{A_j}\}$.
Let $B_{T_x}$ be the nearest $T$-block that is to the left of
$B_{I_i}$ in $\pi$. Thus, we have $\max\{B_{I_i}\}<\max\{B_{T_x}\}$
and $\min\{B_{I_i}\}>\min\{B_{T_x}\}$.
It follows from description of step $2$ that $B_{T_x}$ is to the right of $R(B_{I_i})$ in $\sigma$, and hence to the right of $L(B_{A_j})$. Thus, by step $3$, we see that $\min\{B_{A_j}\}<\min\{B_{T_x}\}$.
Hence, $\min\{B_{I_i}\}>\min\{B_{A_j}\}$ and the last letter of $L(B_{A_j})$$(1 \leq j \leq p)$
is an RL-minimum of $\sigma$.

Let $B_{T_y}$ be the nearest $T$-block that
is to the right of $B_{A_j}$ in $\pi$,
then $\max\{B_{A_j}\}<\max\{B_{T_y}\}$
and $\min\{B_{A_j}\}>\min\{B_{T_y}\}$.
By step $3$, $B_{T_y}$ is to the left of $L(B_{A_j})$
in $\sigma$. Then, $B_{A_j}$ contains no LR-maximum
follows from the fact that $\max\{B_{A_j}\}<\max\{B_{T_y}\}$, as desired.

Notice that $B_{N_j}$$(1 \leq j \leq h)$ contains
no RL-minimum nor LR-maximum of $\sigma$.  By all the
analysis above, we may obtain a block decomposition
of $\sigma$ and propositions $(1)-(4)$ follows. This
completes the proof.
\end{proof}

\begin{proof}[Proof of Theorem~\ref{thm:sn}]
Let $\pi=\pi_1 \pi_2 \cdots \pi_n \in S_n$ with
a block decomposition given in (\ref{equ:block}) and
$\sigma=\varphi(\pi)$.
It suffices to show that $\varphi$ is an
involution over $S_n$
such that
\begin{equation}\label{equ:equdis}
 (\rlm, \rlmin, \lrmax, \des,(\underline{321}))(\pi) =(\rlm, \lrmax, \rlmin, \des,(\underline{321}))(\sigma).
\end{equation}

Firstly, we prove that $\varphi$ is an
involution, i.e. $\varphi(\sigma)=\pi$.
Assume that $p=\varphi(\sigma)$,
by applying Proposition \ref{prop:newblock} twice, we have
 \begin{eqnarray*}
    N(p)&=& \{B_{N_1}, \cdots, B_{N_h}\},\\[2pt]
     T(p)&=& \{B_{T_1}, \cdots, B_{T_l}\}, \\[2pt]
     A(p)&=& \{R \circ L (B_{A_1}), \cdots, R \circ L (B_{A_p})\}, \\[2pt]
     I(p)&=& \{L \circ R (B_{I_1}), \cdots, L \circ R(B_{I_q})\}.
 \end{eqnarray*}
  Notice that $\max\{B_{A_c}\}$ is
  the first letter of $B_{A_c}$ for $1 \leq c \leq p$,
  while $\min\{B_{I_d}\}$ is
  the last letter of $B_{I_d}$ for $1 \leq d \leq q$.
  Then from items $1, 2$ in Proposition \ref{prop:LR} we deduce that
   %\begin{eqnarray*}
%    N(p)&=& \{B_{N_1}, \cdots, B_{N_h}\},\\[2pt]
%     T(p)&=& \{B_{T_1}, \cdots, B_{T_l}\}, \\[2pt]
%     A(p)&=& \{B_{A_1}, \cdots, B_{A_p}\}, \\[2pt]
%     I(p)&=& \{B_{I_1}, \cdots, B_{I_q}\}.
% \end{eqnarray*}
\begin{eqnarray*}
   N(p)&=\,\, \{B_{N_1}, \cdots, B_{N_h}\}, \,\,\,T(p)&=\,\, \{B_{T_1}, \cdots, B_{T_l}\}, \\[2pt]
     A(p)&= \,\,\{B_{A_1}, \cdots, B_{A_p}\}, \,\,\,\,\,I(p)&= \,\,\{B_{I_1}, \cdots, B_{I_q}\}.
\end{eqnarray*}
Comparing with the block decomposition of $\pi$ given in
(\ref{equ:block}),  we see that $p=\pi$.
Hence $\varphi^2(\pi)=\pi$ and $\varphi$ is
an involution over $S_n$.

Now, we proceed to prove (\ref{equ:equdis}).
Viewing (\ref{equ:block}) and Proposition
\ref{prop:newblock},  we have
$\lrmax(\pi)=\rlmin(\sigma)=p$ and
$\rlmin(\pi)=\lrmax(\sigma)=q$.
If $\pi_1=1$, then it is easy to check that
$\sigma_1=1$. Hence, we have $\rlm(\pi)=\rlm(\sigma)=0$.
Otherwise, suppose that $\RLM(\pi)=\{\pi_{r_1}, \pi_{r_2}, \cdots, \pi_{r_s}\}$, where $r_1 < r_2 <\cdots<r_s$. Then,
$\pi_{r_1}$ is the nearest LR-maximum of $\pi$ that
is to the left of $1$. It follows that $\pi_{r_1} \cdots 1$ is a
T-block of $\pi$. Hence, $\pi_{r_1} \cdots 1$ remains a
 T-block of $\sigma$ by Proposition \ref{prop:newblock}.
 Thus, $\RLM(\sigma)=\{\pi_{r_1}, \pi_{r_2}, \cdots, \pi_{r_s}\}$ and
 we obtain that $\rlm(\pi)=\rlm(\sigma)=s$.
 We claim that
 all descents of a permutation are always contained in blocks.
 Assume that  $i$ is a descent of $\pi$ with $\pi_i >\pi_{i+1}$,
 then $\pi_i$ is not an RL-minimum and $\pi_{i+1}$ is not a LR-maximum.
 Hence there is no bar neither after $\pi_i$  nor before $\pi_{i+1}$.
 The claim is verified. It follows directly that $\des(\pi)=\des(\sigma)$.
Combining with items $3,4$ in Proposition \ref{prop:LR},
we have $(\underline{321}) (\pi)=(\underline{321}) (\sigma)$.
This completes the proof.
\end{proof}

It should be mentioned that $\varphi$ does not keep the statistic
$\ides$. We check the following conjecture by computer for $n \leq 9$.
\begin{conj}
Statistic $(\rlm, \rlmin, \lrmax, \des,\ides, (\underline{321}))$ are equally distributed with Statistic $(\rlm, \lrmax, \rlmin, \des, \ides, (\underline{321}))$ over $S_n$.
\end{conj}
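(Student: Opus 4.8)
The plan is to reformulate the conjecture through the reverse--complement map and thereby isolate a single obstruction. By Lemma~\ref{wnm} we have $\wnm=\lrmax$, so the conjecture asserts the existence of a bijection on $S_n$ that interchanges $\rlmin$ and $\lrmax$ while fixing each of $\rlm$, $\des$, $\ides$ and $(\underline{321})$. The involution $\varphi$ of Theorem~\ref{thm:sn} already realizes all of this except for $\ides$, and, as noted in the paper, it genuinely fails to preserve $\ides$. The reason is structural: since $\ides(\pi)=\des(\pi^{-1})$ is governed by the inverse permutation, whose block decomposition bears no relation to that of $\pi$, the cyclic rotations $L$ and $R$ rearrange values inside a block and so scramble the positions of the consecutive pairs $(i,i+1)$ that determine $\ides$. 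For this reason I would not try to patch $\varphi$ directly.

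Instead, first I would exploit the reverse--complement $\pi\mapsto\pi^{rc}:=(\pi^{r})^{c}$. A routine check of the identities $(\pi^{r})^{-1}=(\pi^{-1})^{c}$ and $(\pi^{c})^{-1}=(\pi^{-1})^{r}$ gives $(\pi^{rc})^{-1}=(\pi^{-1})^{rc}$, so reverse--complement commutes with inversion. Since it preserves the number of descents and carries a consecutive $321$ to a consecutive $321$, it preserves $\des$, $(\underline{321})$, and (via the commutation with inversion, together with $\ides(\pi)=\des(\pi^{-1})$) also $\ides$; moreover it interchanges $\rlmin$ and $\lrmax$. Thus $\pi\mapsto\pi^{rc}$ already matches five of the six statistics in the conjecture, the only defect being $\rlm$. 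Tracing the definition through the reverse and the complement yields
\[
\rlm(\pi^{rc})=\beta(\pi),
\]
where $\beta(\pi)$ is the number of left-to-right minima of the factor of $\pi$ to the right of its maximal entry $n$ --- precisely the reverse--complement image of the right-to-left maxima lying to the left of $1$ that define $\rlm(\pi)$.

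Re-indexing $S_n$ by $\pi\mapsto\pi^{rc}$ now turns the conjecture into an equivalent but more symmetric statement: it holds if and only if
\[
(\rlm,\lrmax,\rlmin,\des,\ides,(\underline{321}))\quad\text{and}\quad(\beta,\lrmax,\rlmin,\des,\ides,(\underline{321}))
\]
are equidistributed on $S_n$; equivalently, one must construct an involution $C$ on $S_n$ that fixes each of $\lrmax$, $\rlmin$, $\des$, $\ides$ and $(\underline{321})$ and interchanges the two statistics $\rlm$ and $\beta$. To build $C$ I would argue by induction on $n$, peeling off the extreme values $1$ and $n$: the entry $1$ and the entries to its left control $\rlm$, the entry $n$ and the entries to its right control $\beta$, and removing them after a suitable standardization should lower $n$ while transporting an occurrence of $\rlm$ to one of $\beta$ and leaving the remaining five statistics untouched.

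The hard part will be this last step. The statistics $\rlm$ and $\beta$ live at opposite ends and opposite extremes of the permutation, so any involution realizing their exchange must move structure from the region left of $1$ to the region right of $n$; doing so while simultaneously holding the global, inverse-defined statistic $\ides$ fixed (together with $\lrmax$, $\rlmin$, $\des$ and the consecutive pattern $(\underline{321})$) is exactly the reconciliation that the block decomposition does not provide. I expect that the correct $C$ will again be most naturally described on a block decomposition, but one taken symmetrically with respect to inversion so that $\des$ and $\ides$ play interchangeable roles; verifying that a single such map preserves all five auxiliary statistics at once is where the real work lies.
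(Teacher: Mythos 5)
This statement is a \emph{conjecture} in the paper, not a theorem: the author explicitly notes that the involution $\varphi$ of Theorem~\ref{thm:sn} fails to preserve $\ides$ and offers no proof, only a computer verification for $n\leq 9$. So there is no ``paper's own proof'' to match, and any submission here must stand entirely on its own.

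Your proposal does not do that. The reverse--complement reformulation is correct as far as it goes: $\pi\mapsto\pi^{rc}$ does commute with inversion, does preserve $\des$, $\ides$ and $(\underline{321})$, does interchange $\rlmin$ and $\lrmax$, and does carry $\rlm$ to the statistic you call $\beta$ (the number of LR-minima to the right of the entry $n$). This is a clean and potentially useful restatement: the conjecture is equivalent to the equidistribution of $\rlm$ and $\beta$ jointly with the five remaining statistics. But that is where your argument stops. The involution $C$ that is supposed to exchange $\rlm$ and $\beta$ while fixing $\lrmax$, $\rlmin$, $\des$, $\ides$ and $(\underline{321})$ is never constructed; the inductive ``peel off $1$ and $n$'' step is only gestured at, and you yourself concede that making it compatible with the inverse-defined statistic $\ides$ is ``where the real work lies.'' That compatibility is precisely the obstruction that defeats the paper's own map $\varphi$, so deferring it is deferring the entire content of the conjecture. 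As written, this is a reduction plus a research plan, not a proof; the statement remains open.
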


 \section{A bijective proof of Theorem \ref{thm:rlmlrmax}}\label{sec:3}
 In this section, we present an involution over $S_n$ to give a combinatorial interpretation of Theorem \ref{thm:rlmlrmax}.
 In view of Lemma \ref{wnm}, it is enough to prove the following theorem.

 \begin{thm} \label{inv}
There exists an involution $\phi$ on $S_n$
such that
\begin{align}\label{re-rlm-wnm1}
    &\rlm(\pi)=\lrmax(\phi(\pi))-1,\\
    &\lrmax(\pi)-1=\rlm(\phi(\pi)).\label{re-rlm-wnm2}
\end{align}
\end{thm}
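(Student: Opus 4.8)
The plan is to construct the involution $\phi$ explicitly and then verify the two statistic equalities (\ref{re-rlm-wnm1}) and (\ref{re-rlm-wnm2}). Since $\rlm$ counts RL-maxima appearing strictly to the left of the entry $1$, and $\lrmax$ counts LR-maxima of the whole permutation, the natural starting idea is to locate the position of $1$ in $\pi$ and think about how the prefix before $1$ and the suffix after $1$ interact. I would first dispose of the degenerate case $\pi_1=1$, where $\rlm(\pi)=0$ and, as one checks, $\lrmax(\pi)=1$, so that both equations force $\phi$ to fix (or at least preserve the relevant data of) such permutations; this matches the $-1$ shifts and suggests $\phi$ should keep the entry $1$ in first position exactly when it was there originally.

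For the main case I would decompose $\pi$ relative to the entry $1$. Write $\pi = u\,1\,v$ where $u=\pi_1\cdots\pi_{m-1}$ is the prefix before $1$ (at position $m$) and $v$ is the suffix. By definition, $\RLM(\pi)$ is the set of RL-maxima of $u$. The key observation I expect to exploit is a symmetry exchanging the role of ``RL-maxima of the prefix before $1$'' with ``LR-maxima of the whole word, minus one.'' A clean way to engineer this is to apply a reverse-complement-type operation to an appropriate sub-block so that RL-maxima of $u$ become LR-maxima of the transformed permutation, while arranging the largest letter $n$ to account for the $-1$ discrepancy (the largest letter is automatically both an LR-maximum and, after the transformation, should play the role that $1$ played before). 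Concretely, I would try $\phi$ built from the standardization map $\st$ together with reversal/complement on the block before the $1$, analogous to the map $\chi$ used in the proof of Theorem~\ref{thm:sn321} and the maps $L,R$ used for Theorem~\ref{thm:sn}; the block-decomposition language of Proposition~\ref{prop:block} and Proposition~\ref{prop:newblock} gives exactly the vocabulary to track LR-maxima and RL-minima through such a rearrangement.

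The verification then splits into two symmetric claims. First, I would show $\lrmax(\phi(\pi))-1=\rlm(\pi)$: the LR-maxima of $\phi(\pi)$ other than the forced initial one should be in bijection with the RL-maxima of $u$ preserved (after reversal/complement) by the construction. Second, I would show $\rlm(\phi(\pi))=\lrmax(\pi)-1$: here I track where the entry $1$ lands under $\phi$ and argue that the RL-maxima before it correspond precisely to the non-trivial LR-maxima of $\pi$. Finally I would confirm $\phi\circ\phi=\mathrm{id}$, presumably by checking that the two rearrangement steps are mutual inverses (using items~1 and~2 of Proposition~\ref{prop:LR} if $L,R$ are involved), which simultaneously shows $\phi$ is a bijection.

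The main obstacle I anticipate is bookkeeping the off-by-one shift coherently on both sides at once: the definitions of $\rlm$ and $\lrmax$ are asymmetric (one is anchored at the position of $1$ and restricted to a prefix, the other is global), so the involution must convert a \emph{localized} prefix statistic into a \emph{global} statistic and back, with the extra $+1$ coming from the extremal letter ($1$ or $n$) in a consistent way. Getting $\phi$ to be genuinely self-inverse rather than merely a bijection, while respecting \emph{both} shifted identities, is the delicate point; I would expect to need the structural facts about how LR-maxima and RL-minima sit inside the block decomposition (Proposition~\ref{prop:block}) to guarantee that the transformation of the prefix is reversible and that no LR-maximum is created or destroyed beyond the intended correspondence.
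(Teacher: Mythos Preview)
Your proposal contains a factual error and a structural gap that together prevent the sketched approach from working.

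First, the error: you write that when $\pi_1=1$ one has $\lrmax(\pi)=1$. This is false (e.g.\ $\pi=1\,2\,3$ has $\lrmax=3$), and consequently $\phi$ cannot fix such permutations: if $\phi(\pi)=\pi$ then (\ref{re-rlm-wnm2}) would force $0=\rlm(\pi)=\lrmax(\pi)-1$, which fails in general.

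Second, and more seriously, the decomposition $\pi=u\,1\,v$ with a transformation applied only to the prefix $u$ cannot control $\lrmax(\pi)$: LR-maxima of $\pi$ may lie in $v$ (namely those entries of $v$ exceeding $\max(u)$, always including $n$ when $n\in v$). Any rearrangement of $u$ alone leaves these contributions untouched, so $\rlm$ and $\lrmax-1$ cannot be swapped this way. The paper's key structural move---absent from your plan---is to case-split on the relative positions of $1$ and $n$: one writes $\pi=u\,n\,v$ when $1$ precedes $n$, or $\pi=p\,1\,q$ when $n$ precedes $1$, and transforms only the initial segment ending at the \emph{later} of the two extremes. In either case both $\rlm(\pi)$ and $\lrmax(\pi)$ are determined entirely by that initial segment, so the suffix can be carried along unchanged.

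The transformation on that segment (the paper's Lemma~\ref{sn1n}) is not a mere reverse-complement or an application of $L,R$. It is a bijection $\rho:S_n^n\to S_n^1$ based on a six-part decomposition $a\,b\,1\,c\,d\,n\mapsto b^r\,c\,n\,d^r\,a^r\,1$, where $b$ begins at the maximum of the word before $1$ and $d$ begins at the first entry after $1$ exceeding that maximum. The swap of statistics then comes from identifying $\rlm(\pi)$ with the LR-maxima of $b^r$ and $\lrmax(\pi)-1$ with the LR-maxima of $a\,d\,n$. None of the tools you invoke ($\chi$, $L$, $R$, Proposition~\ref{prop:block}) produce this map; the block decomposition of Section~\ref{sec:2} plays no role here.
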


In the following, we shall give such an involution.
We first consider some special cases.
Define
\begin{align*}
    S_n^1&=\{\pi=\pi_1 \cdots \pi_n | ~\pi_n=1\},\\
    S_n^n&=\{\pi=\pi_1 \cdots \pi_n | ~\pi_n=n\}.
\end{align*}

\begin{lem}\label{sn1n}
There is a bijection $\rho$ from $S_n^n$ to $S_n^1$, such that
\begin{align}
\label{rlm-wnm}
    &\rlm(\pi)=\lrmax(\rho(\pi))-1,\\
    &\lrmax(\pi)-1=\rlm(\rho(\pi)). \label{wnm-rlm}
\end{align}
\end{lem}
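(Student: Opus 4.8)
The plan is to build $\rho$ by peeling off the forced last letter and reducing to a record-controlling bijection on $S_{n-1}$. First I would record how the mandatory last symbol interacts with the three statistics. If $\pi=w\,n\in S_n^n$ with $w\in S_{n-1}$, then the trailing $n$ is a new left-to-right maximum and lies to the right of the value $1$, so $\lrmax(\pi)=\lrmax(w)+1$ while $\rlm(\pi)=\rlm(w)$ (the subword to the left of $1$ is unchanged). If $\sigma=u\,1\in S_n^1$ and $v=\st(u)\in S_{n-1}$ is the standardization of $u$, then the trailing $1$ is never a left-to-right maximum, so $\lrmax(\sigma)=\lrmax(v)$, and since $1$ now sits at the very end the whole of $u$ lies to its left, giving $\rlm(\sigma)=\rlmax(v)$. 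Hence \eqref{rlm-wnm}--\eqref{wnm-rlm} become equivalent to producing a bijection $\psi\colon S_{n-1}\to S_{n-1}$ with
\[
\lrmax(\psi(w))=\rlm(w)+1,\qquad \rlmax(\psi(w))=\lrmax(w),
\]
after which $\rho$ is recovered by $\rho(w\,n)=\st^{-1}(\psi(w),\{2,\dots,n\})\,1$. Equivalently, I must show that $(\rlm,\lrmax)$ and $(\lrmax-1,\rlmax)$ are equidistributed on $S_{n-1}$, which I would first confirm on small $n$ as a sanity check.

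To construct $\psi$ I would decompose $w=A\,1\,B$ at the value $1$, so that $\rlm(w)=\rlmax(A)$ is exactly the number of right-to-left maxima of the prefix $A$. The second identity is the easy half: reversing a word interchanges its two kinds of records, $\rlmax(w^r)=\lrmax(w)$ and $\lrmax(w^r)=\rlmax(w)$, so any construction ending in a reversal delivers $\rlmax(\psi(w))=\lrmax(w)$ for free. The real content is the first identity: I must arrange that $\psi(w)$ has \emph{precisely} $\rlmax(A)+1$ left-to-right maxima rather than the full $\rlmax(w)$. The obstruction is the block $B$ of letters lying after $1$, whose right-to-left records inflate the count. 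My plan is to relocate and relabel $B$ so that it contributes no new record on the side that governs the left-to-right maxima, leaving the prefix $A$ to supply exactly $\rlmax(A)$ of them and the value $1$ the extra one. Concretely I would split the image around its maximum as $L\,m\,R$, so that $\lrmax=\lrmax(L)+1$ and $\rlmax=\rlmax(R)+1$; a reversed copy of $A$ supplies the $\rlmax(A)$ left-to-right maxima of $L$, while the remaining $\lrmax(w)-1$ left-to-right maxima of $w$ control $\rlmax(R)$, and the letters of $B$ are threaded into the non-record gaps.

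Finally I would verify both identities by tracking records directly through the construction, and establish bijectivity by describing the inverse explicitly, reading off the non-record positions of $v$ to recover $B$ and undoing the reversal to recover $A$ together with the position of $1$. The hard part will be exactly this threading and relabeling of $B$: it must be invertible and must hit $\rlmax(A)+1$ on the nose. This is precisely the step where the naive candidates break down -- one checks already on $S_3$ and on $S_4^4$ that neither the complement nor the reversal alone transports $(\rlm,\lrmax)$ to $(\lrmax-1,\rlmax)$ -- so the position of the value $1$ must genuinely drive the map, and getting the suffix reinsertion simultaneously invertible and record-correct is where I expect the main difficulty to lie.
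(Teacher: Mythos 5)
Your reduction is sound: peeling off the forced final letter and standardizing correctly converts \eqref{rlm-wnm}--\eqref{wnm-rlm} into the existence of a bijection $\psi$ on $S_{n-1}$ with $\lrmax(\psi(w))=\rlm(w)+1$ and $\rlmax(\psi(w))=\lrmax(w)$, and the surrounding bookkeeping (a trailing $n$ adds one LR-maximum and leaves $\rlm$ alone; a trailing $1$ leaves $\lrmax$ alone and turns $\rlm$ into the $\rlmax$ of the prefix) is right. But the argument stops exactly where the lemma's content begins. ``Thread the letters of $B$ into the non-record gaps'' is a wish, not a map: you never specify which letters of $B$ go to the left of the maximum and which to the right, in what order, or why the insertion creates no spurious records and destroys none that you need, and you yourself flag this as the open difficulty. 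As written there is no bijection to verify and no inverse to describe, so this is a genuine gap rather than a complete proof by another route.

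For comparison, the paper resolves precisely this step by cutting both the pre-$1$ block and the post-$1$ block at value-determined positions before rearranging. Writing $\pi=ab1cdn$, where $b$ begins at the maximum $\pi_i$ of the pre-$1$ prefix and $d$ begins at the first post-$1$ letter exceeding $\pi_i$ (so every letter of $c$ lies below $\pi_i$), it sets $\rho(\pi)=b^rcnd^ra^r1$. In your language $L=b^rc$ and $R=d^ra^r$: the cut of $A=ab$ at its maximum gives $\lrmax(b^rc)=\rlmax(b)=\rlmax(A)=\rlm(w)$, since $c$ is dominated by the final letter $\pi_i$ of $b^r$, while the cut of $B=cd$ at the first letter exceeding $\pi_i$ gives $\rlmax(d^ra^r)=\lrmax(a)+\lrmax(d)=\lrmax(w)-1$. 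Note that, contrary to your sketch, only part of $A$ (namely $b^r$) lands in $L$, with $a^r$ migrating to $R$; it is exactly this threshold splitting that makes both record counts come out on the nose and makes the map invertible (the inverse recovers the cuts from $\max(p)$ in $\tau=pnq1$ and the rightmost letter of $q$ exceeding it). To complete your proof you would need to supply an equally explicit construction of $\psi$ together with its inverse.
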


\begin{proof}
Given a permutation $\pi \in S_n^n$, assume that
$\pi_k=1$ and
%$w$ is the subword of $\pi$ to the left of $1$, $u$ is the subword between elements $1$ and $n$. Namely,
$\pi=w1un$, where $w$ and $u$
can be empty.
Let $\pi_i=\max(w)$  and $j$ be the least element
such that $k<j<n$ and $\pi_j>\pi_i$,
% $\pi_j$ be the first element of $u$ that is larger than $\pi_i$,
if there exist. Then, assume that
$a=\pi_1 \cdots \pi_{i-1}$, $b=\pi_i \cdots \pi_{k-1}$,
$c=\pi_{k+1} \cdots \pi_{j-1}$ and
$d=\pi_{j} \cdots \pi_{n-1}$
%Assume that $a$ is  the subword of $w$ containing $\pi_1 \cdots \pi_{i-1}$, $b$ is the subword of $w$ containing $\pi_i \cdots \pi_{k-1}$,
% $c$ is the subword of $u$ containing $\pi_{k+1} \cdots \pi_{j-1}$ and
% $d$ is the subword of $u$ containing $\pi_{j} \cdots \pi_{n-1}$.
Thus, we decompose $\pi$ into six blocks, namely,
$\pi=ab1cdn$. It should be noted that each of the blocks $a,b,c,d$ can be empty.
%For a word $h=h_1 h_2 \cdots h_s$, define the reverse of $h$ by $h^r=h_s h_{s-1} \cdots h_1$.
Define $\rho(\pi)$ to be  $\pi'=b^r c n d^r a^r 1$
Clearly, $\pi' \in S_n^1$. It follows that $\rho$ is a map from $S_n^n$ to $S_n^1$.

To prove that $\rho$ is a bijection, we give the inverse map of $\rho$.
Given a permutation $\tau \in S_n^1$, let $\tau=pnq1$. Both of $p$ and $q$ can be empty. If there exists, assume that  $\tau_l$ is the largest element of $p$. Let $\tau_s$ is the rightmost element
of $q$ that is larger than $\tau_l$, if there exists. Suppose that
$\tau_t=n$ where $t <n$. We decompose $\tau$ into six blocks by
setting $\tau=efngh1$, where $e=\tau_1 \cdots \tau_l$, $f=\tau_{l+1} \cdots \tau_{t-1}$, $g=\tau_{t+1} \cdots \tau_s$ and $h=\tau_{s+1} \cdots \tau_{n-1}$.
Define $\chi(\tau)$ to be the permutation $\tau'$ where $\tau'=h^r e^r 1 f g^rn$. It can be easily checked that
$\chi$ is the inverse map of $\rho$. Hence, $\rho$ is a bijection.

Next, we proceed to prove relations (\ref{rlm-wnm}) and (\ref{wnm-rlm}).
It is not hard to check that the following relations.
\begin{align*}
\rlm(\pi)&= \text{the number of LR-maxima of $b^r$,}\\
\lrmax(\pi)-1&=\text{the number of LR-maxima of $a \pi_i d$,}\\
\rlm(\pi')&= \text{the number of LR-maxima of $adn$,}\\
\lrmax(\pi')-1&=\text{the number of LR-maxima of $b^r$.}
\end{align*}
Notice that the number of LR-maxima of $a \pi_i d$ equals to the number of LR-maxima of $adn$. Hence relations (\ref{rlm-wnm}) and (\ref{wnm-rlm}) follows,
as desired.
\end{proof}
%Notice that a permutation of $S_n$ is a word on the set $\{1,2, \cdots,n\}$.
%Given an arbitrary set of integers, say, $\{i_1, i_2, \ldots,i_n\}_<$,
%let $W_n^{i_1}$ be the set of words  ending with $i_1$, and
%$W_n^{i_n}$ be the set of words ending with $i_n$.
%We remark that the bijection $\rho$ can  been also seen as  a bijection from $W_n^{i_n}$ to $W_n^{i_1}$. This can be performed as follows.
%Given a permutation $p$ of the set $W_n^{i_n}$,
%let $\pi=st(p)$ be a permutation of $\{1,2, \ldots,n\}$ such that
%$\pi$ is order isomorphic to $p$. Let $\tau=\rho(\pi)$. Then we set $\rho(p)= st^{-1}(\tau)$, where $st^{-1}(\tau)$ is a permutation of the set $W_n^{i_n}$ that is order isomorphic to $\tau$. It can be easily checked that $\rho$ is a bijection from $W_n^{i_n}$ to $W_n^{i_1}$. Moreover, for the set $\{i_1, i_2, \ldots,i_n\}_<$, if $i_1=1$ and $i_n=n$, it is routine to check that relations (\ref{rlm-wnm}) and (\ref{wnm-rlm}) still hold.

Based on Lemma \ref{sn1n},   we are now ready to give the involution $\phi$ on $S_n$.

\noindent
{\it Proof of Theorem \ref{inv}.}
 Firstly, we give the description of $\phi$.
For a permutation $\pi \in S_n$, there are two cases to consider.
\begin{itemize}
\item[Case 1:]  $1$ is to the left of $n$. Assume that $\pi=unv$ and
$(e,S)=\st(un)$.
Then $\phi$ is defined by letting $\phi(\pi)=st^{-1}(\rho(e),S)v$.
\item[Case 2:] $1$ is to the right of $n$. Assume that $\pi=p1q$
 and $(o,T)=\st(p1)$.
Then $\phi$ is defined by letting $\phi(\pi)=st^{-1}(\rho^{-1}(o),T)q$.
\end{itemize}

From the construction of $\phi$, it is easily seen that
$\phi$ is an involution on $S_n$. In the following, we proceed to
prove relations (\ref{re-rlm-wnm1}) and (\ref{re-rlm-wnm2}).

By Lemma \ref{sn1n},
$\lrmax(e)-1=\rlm(\rho(e))$ and $\rlm(e)=\lrmax(\rho(e))-1$.
By order-isomorphic, we deduce that $\lrmax(un)-1=\rlm(st^{-1}(\rho(e),S))$ and $\rlm(un)=\lrmax(st^{-1}(\rho(e),S))-1$.
Notice that in case 1,
there is no element $z$ in subword $v$ such that
$z\in \RLM(\pi)$ nor $z \in \LRMAX(\pi)$.
Thus,  $\lrmax(\pi)-1=\rlm(\phi(\pi))$ and $\rlm(\pi)=\lrmax(\phi(\pi))-1$
hold for case 1. The fact that
(\ref{re-rlm-wnm1}) and (\ref{re-rlm-wnm2}) hold for case 2 can be proved similarly and we omit it here. We complete the proof.\qed

We end this section by giving examples of bijections $\rho$ and $\phi$.
\begin{example}
Let $\pi=3\, 7\, 2\, 5\, 1\, 4\, 8\, 6\, 9$, then
\[a=3, \,\,\,b=7\,2\,5,  \,\,\,c=4,  \,\,\, d=8\,6.\]
Hence, $\rho(\pi)=5\, 2\, 7\, 4\, 9\, 6\, 8\, 3\, 1$.
Let $\sigma=3\,8\,2\,5\,1\,4\,9\,6\,10\,7$, then
\[\st(3\,8\,2\,5\,1\,4\,9\,6\,10)=(3 \, 7 \,2\, 5\, 1\, 4\, 8\, 6\, 9,\{1,2,3,4,5,6,8,9,10\})\]
and hence
$\phi(\sigma)=5\, 2\, 8\, 4\, 10\, 6\, 9\, 3\, 1\,7.$
\end{example}

\section{A bijective proof of Theorem \ref{thm:rlm-rlmax-1}}\label{sec:4}
In this section,  we first prove Lemma \ref{lem:max2rlmin} by
giving an involution $\gamma$ over the set of inversion sequences of length $n$.
This allows us to construct a bijection $\alpha$  on $S_n$ implying
Lemma \ref{lem:keepascrlmax}. Based on Lemma \ref{lem:keepascrlmax}, another bijection $\beta$ over $S_n$ is given, which proves Theorem \ref{thm:rlm-rlmax-1}.

\begin{lem}\label{lem:max2rlmin}
Statistics $(\dist,\zero,\max,\rlmin)$ and $(\dist,\zero,\rlmin,\max)$
are equally distributed over $I_n$. Particularly,
there is an involution $\gamma$ over $I_n$ such that for each $e \in I_n$ we have
\begin{equation}\label{equ:max2rlmin}
  (\dist,\zero,\max,\rlmin) e =  (\dist,\zero,\rlmin,\max) \gamma(e).
\end{equation}
\end{lem}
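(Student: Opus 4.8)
The plan is to construct the involution $\gamma$ on $I_n$ directly and then verify that it swaps $\max$ and $\rlmin$ while fixing $\dist$ and $\zero$. First I would understand what each statistic "sees" in an inversion sequence $s=s_1\cdots s_n$. The entries with $s_i=0$ form $\ZERO(s)$; the entries with $s_i=i-1$ (the largest allowed value) form $\MAX(s)$; the entry $s_i$ is counted by $\rlmin$ when $s_i$ is a right-to-left minimum of the word (strictly smaller than every later entry). The statistic $\dist$ counts positions $i\ge 2$ with $s_i\neq 0$ whose value is distinct from all later values. My first step is to record the easy structural facts: position $1$ always has $s_1=0$, so $1\in\ZERO(s)\cap\MAX(s)$ automatically and contributes to neither $\max$ counted over $\{2,\dots,n\}$-style statistics in a way that interferes; and a position can simultaneously lie in $\MAX(s)$ and be an $\rlmin$, so the involution must be designed not to double-move such positions.

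The central idea I would pursue is that $\max$ and $\rlmin$ are, in a suitable sense, dual extremal conditions: $s_i=i-1$ says $s_i$ is as large as the position allows, while being an $\rlmin$ says $s_i$ is small relative to its suffix. The natural approach is to define $\gamma$ \emph{position by position from right to left}, deciding for each $i$ whether to rescale $s_i$ so as to trade the "max" property for the "rlmin" property and vice versa, while keeping the set $\ZERO(s)$ (the positions where the value is $0$) and the set $\DIST(s)$ invariant. Concretely, I would try to build $\gamma$ as a value-reindexing map: at each position $i$ that currently realizes one of the two extremal conditions, replace $s_i$ by a value determined by the relative rank of $s_i$ among the admissible residues, so that a position contributing to $\MAX$ becomes a position contributing to $\rlmin$ and conversely, leaving a zero as a zero and preserving distinctness. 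Verifying that the rescaling stays within $0\le s_i\le i-1$ and that it is genuinely an involution is the bookkeeping core.

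The key steps, in order, are: (1) give the explicit definition of $\gamma$, processing positions and specifying the new value $\gamma(s)_i$ in terms of the comparison data of the suffix $s_{i+1}\cdots s_n$; (2) check $0\le \gamma(s)_i\le i-1$ so that $\gamma(s)\in I_n$; (3) check $\ZERO(\gamma(s))=\ZERO(s)$ and $\DIST(\gamma(s))=\DIST(s)$, which should follow because the map only permutes nonzero admissible values at each position without creating or destroying coincidences with later values; (4) check that $\MAX(s)$ and $\RLMIN(s)$ get interchanged in cardinality, i.e.\ $\max(\gamma(s))=\rlmin(s)$ and $\rlmin(\gamma(s))=\max(s)$; and (5) confirm $\gamma\circ\gamma=\mathrm{id}$, which is cleanest if the rescaling at each position is manifestly self-inverse once the suffix is fixed (processing right-to-left guarantees the suffix is already determined when position $i$ is treated). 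Then equation (\ref{equ:max2rlmin}) is immediate and the equidistribution statement is a corollary.

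The hard part will be step (4) combined with the involution property: the $\rlmin$ condition is a \emph{global} comparison against the entire suffix, whereas the $\max$ condition is purely \emph{local} ($s_i=i-1$), so trading one for the other requires the local value to encode exactly the information that the global suffix-comparison needs, and vice versa. I expect the main obstacle to be finding the right normalization so that a position which is simultaneously a max and an rlmin (or neither) maps to a position of the correct combined type, and so that this choice is symmetric under a second application of $\gamma$. I would handle this by carefully treating the positions lying in $\MAX(s)\cap\RLMIN(s)$, $\MAX(s)\setminus\RLMIN(s)$, $\RLMIN(s)\setminus\MAX(s)$, and neither, as four cases, and checking that $\gamma$ permutes these four types in an involutive fashion consistent with the desired statistic swap.
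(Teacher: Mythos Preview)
Your plan has a genuine obstruction, not just an unfinished step. You propose to build $\gamma$ as a position-by-position value-reindexing map, and in step (3) you explicitly aim for $\ZERO(\gamma(s))=\ZERO(s)$ as \emph{sets}. This is impossible already for $n=3$. Take $e=001\in I_3$: here $\zero(e)=2$, $\dist(e)=1$, $\max(e)=1$, and $\rlmin(e)=2$. The target $\gamma(e)$ must therefore satisfy $\zero=2$, $\dist=1$, $\max=2$, $\rlmin=1$. Running through $I_3$ one finds that the \emph{only} such sequence is $010$, for which $\ZERO(010)=\{1,3\}\neq\{1,2\}=\ZERO(001)$. So any correct $\gamma$ must move zeros between positions; a map that fixes each index and merely rescales the value there cannot realize the swap. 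The difficulty you flagged in step (4), that $\rlmin$ is global while $\max$ is local, is exactly what forces this: to turn a non-$\rlmin$ position into an $\rlmin$ you generally have to change what lies to its right, not just its own value.

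For comparison, the paper's $\gamma$ is not a value-reindexing at all but a recursive \emph{insertion} map: one sets $\gamma(0)=0$ and, given $r'=\gamma(e_1\cdots e_{n-1})$, obtains $\gamma(e)$ by inserting $e_n$ into position $e_n+1$ of $r'$. This rearranges positions rather than values, and it does \emph{not} preserve $\ZERO$ or $\DIST$ as sets, only their cardinalities. The involution and the $(\max,\rlmin)$ swap are then proved by a short induction using the key observation that $e_n+1$ is always the largest element of $\MAX(\gamma(e))$. If you want to salvage your approach, you would need to drop the set-level preservation of $\ZERO$ and allow the map to permute positions; at that point the cleanest route is essentially the paper's recursive insertion.
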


\begin{lem}\label{lem:keepascrlmax}
Statistics $(\asc,\rlmax,\lrmax,\rlmin)$ and $(\asc,\rlmax,\rlmin,\lrmax)$ are equally distributed
over $S_n$. Particularly,
there is an involution $\alpha$ over $S_n$ such that for each $\pi \in S_n$ we have
\begin{equation}\label{equ:max2rlminsn}
  (\asc,\rlmax,\lrmax,\rlmin) \pi =  (\asc,\rlmax,\rlmin,\lrmax) \alpha(\pi).
\end{equation}
\end{lem}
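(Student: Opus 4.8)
The plan is to transport the inversion-sequence involution $\gamma$ of Lemma~\ref{lem:max2rlmin} to $S_n$ by conjugating it with a statistic-translating bijection. I would look for a bijection $\Theta\colon S_n\to I_n$ realizing the dictionary
\[
(\asc,\rlmax,\lrmax,\rlmin)\,\pi=(\dist,\zero,\rlmin,\max)\,\Theta(\pi),\qquad \pi\in S_n,
\]
and then set $\alpha=\Theta^{-1}\circ\gamma\circ\Theta$. Since $\gamma$ is an involution and $\Theta$ is a bijection, $\alpha$ is automatically an involution. For the statistic identity, put $e=\Theta(\pi)$, so that $\Theta(\alpha(\pi))=\gamma(e)$; applying the dictionary to $\alpha(\pi)$ and then the relation $(\dist,\zero,\max,\rlmin)e=(\dist,\zero,\rlmin,\max)\gamma(e)$ of Lemma~\ref{lem:max2rlmin} yields $(\asc,\rlmax,\rlmin,\lrmax)\alpha(\pi)=(\asc,\rlmax,\lrmax,\rlmin)\pi$, which is exactly \eqref{equ:max2rlminsn}. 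In other words, $\gamma$'s preservation of $(\dist,\zero)$ becomes $\alpha$'s preservation of $(\asc,\rlmax)$, and $\gamma$'s swap of $(\max,\rlmin)$ becomes $\alpha$'s swap of $(\lrmax,\rlmin)$; so everything reduces to producing $\Theta$ with the four asserted correspondences.

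As a starting point I would use the reversed Lehmer code $\Theta(\pi)=s$: writing $c_m=\bigl|\{\,j>m:\pi_j>\pi_m\,\}\bigr|$ for the number of larger letters to the right of $\pi_m$, set $s_i=c_{n+1-i}$. Three of the four correspondences are then immediate from the definition. Indeed $s_i=0$ iff $\pi_{n+1-i}$ is an RL-maximum, so $\zero(s)=\rlmax(\pi)$; and $s_i=i-1$ iff $\pi_{n+1-i}$ is an RL-minimum, so $\max(s)=\rlmin(\pi)$. Moreover $s_i$ is a strict RL-minimum of $s$ iff $c_{n+1-i}$ is strictly smaller than every earlier $c$-value, and a short argument (comparing $c_m$ with $c_{m'}$ for $m'<m$) shows this happens precisely when $\pi_{n+1-i}$ is an LR-maximum of $\pi$, so $\rlmin(s)=\lrmax(\pi)$.

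The main obstacle is the remaining correspondence $\dist=\asc$, where the plain reversed code fails: its number of distinct nonzero entries is only equidistributed with $\asc$, not equal to it pointwise. For example $\pi=3142$ has $\asc=1$, while its code $(0,0,2,1)$ has two distinct nonzero values. The task is therefore to design $\Theta$ so as to keep the three correspondences above while forcing the number of distinct nonzero entries of $\Theta(\pi)$ to equal $\asc(\pi)$. The natural route is to decompose $\pi$ along its RL-maxima, which are to become the zeros of $s$ (this secures $\zero=\rlmax$), and to assign the remaining non-RL-maximum letters nonzero values that introduce a genuinely new value at each ascent; note that the left endpoint of every ascent is a non-RL-maximum, so $\asc\le n-\rlmax$ leaves enough room, and the assignment must be made compatibly with $\max\leftrightarrow\rlmin$ and with the RL-minima pattern $\rlmin\leftrightarrow\lrmax$. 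Checking that this refined assignment is a well-defined bijection realizing all four correspondences simultaneously, with $\dist=\asc$ the delicate point, is the crux of the argument; once it is in hand, the conjugation $\alpha=\Theta^{-1}\circ\gamma\circ\Theta$ of the first paragraph finishes the proof.
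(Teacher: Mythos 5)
Your first paragraph is a correct reduction and is in fact exactly the route the paper takes: the paper defines $\alpha=\comple\circ\inver\circ b^{-1}\circ\gamma\circ b\circ\inver\circ\comple$, so that $\Theta=b\circ\inver\circ\comple$ plays precisely the role of your dictionary map (with the roles of $\max$ and $\rlmin$ on the inversion-sequence side interchanged relative to your convention, which is immaterial since $\gamma$ swaps them). The problem is that you never actually produce $\Theta$, and producing it is the whole substance of the lemma. The existence of a bijection $S_n\to I_n$ that sends $\asc$ to $\dist$ \emph{pointwise} while simultaneously matching $\rlmax$ with $\zero$ and the pair $(\lrmax,\rlmin)$ with $(\max,\rlmin)$ (in either order) is a nontrivial fact; the paper imports it as Lemma~\ref{lem:Baril}, the Baril--Vajnovszki permutation code $b$ satisfying $(\DES,\IDES,\LRMAX,\LRMIN,\RLMAX)\,\pi=(\ASC,\DIST,\ZERO,\MAX,\RLMIN)\,b(\pi)$ \cite{Baril}, and then pre-composes with $\inver\circ\comple$ to convert $\IDES$ into the ascent set of the original permutation and to permute the four extremal statistics into the required positions.

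Your second and third paragraphs correctly diagnose the situation for the plain reversed Lehmer code --- the three correspondences $\zero(s)=\rlmax(\pi)$, $\max(s)=\rlmin(\pi)$, $\rlmin(s)=\lrmax(\pi)$ do hold, and $\pi=3142$ does witness $\dist(s)\neq\asc(\pi)$ --- but the proposed repair (``assign the non-RL-maximum letters nonzero values that introduce a genuinely new value at each ascent, compatibly with the other three statistics'') is a wish list, not a construction. Nothing in the proposal shows that such an assignment exists, is injective, lands in $I_n$, or preserves the zero set, the maximal entries, and the RL-minimum pattern of the code all at once; arranging $\dist=\asc$ pointwise together with these side conditions is exactly the delicate content of the Baril--Vajnovszki code. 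So the argument has a genuine gap at what you yourself call its crux. It would become a complete (and essentially identical to the paper's) proof if the third paragraph were replaced by an appeal to Lemma~\ref{lem:Baril} composed with $\inver\circ\comple$, or by any explicit bijection verified to realize all four correspondences.
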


%\begin{lem}\label{lem:max2rlmin}
%Statistics $(\DIST,\zero,\max,\rlmin)$ and $(\DIST,\zero,\rlmin,\max)$
%are equally distributed on $I_n$. \textcolor{blue}{(generating function)(set-valued statistics?)}
%\end{lem}

To prove Lemma \ref{lem:max2rlmin}, we construct
 $\gamma$ over $I_n$ by induction.
Let $\gamma(0)=0$. For $e=e_1 e_2 \cdots e_{n-1} e_n \in I_n$,
assume that $r'=\gamma(e_1 e_2 \cdots e_{n-1})$.
Then, $r=\gamma(e)$ is obtained by inserting $e_n$ to the $e_n+1$-th
position of $r'$.

\begin{example}
Let $e=00113213$, then $\gamma(e)$ can be obtained as follows
\begin{align*}
 0 \rightarrow  00 \rightarrow 010 \rightarrow 0110 \rightarrow 01130
 \rightarrow 012130 \rightarrow 0112130 \rightarrow 01132130.
\end{align*}
And $\gamma^2(e)=\gamma(01132130)$ can be obtained as follows
\begin{align*}
 0 \rightarrow  01 \rightarrow 011 \rightarrow 0113 \rightarrow 01213
 \rightarrow 011213 \rightarrow 0113213 \rightarrow 00113213.
\end{align*}
\end{example}

Clearly, $\gamma$ is well-defined and we can easily
verify the following propositions.

\begin{prop}\label{prop:gamma}
Let $e =e_1 e_2 \cdots e_n \in I_n$ and $r=r_1 r_2 \cdots r_n=\gamma(e)$. Then
\begin{itemize}
  \item[(1)] $e_n+1$ is the largest element in $\MAX(r)$.
  \item[(2)] Assume that $j$ is the largest element in $\MAX(e)$,
      then
      \[r=\gamma(e_1 \cdots e_{j-1} e_{j+1} \cdots e_{n})e_j.\]
\end{itemize}
\end{prop}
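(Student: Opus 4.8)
The plan is to analyze directly the single insertion operation underlying the recursion $\gamma(e_1\cdots e_n)=\iota_{e_n}(\gamma(e_1\cdots e_{n-1}))$, where $\iota_v(s)$ denotes the sequence obtained from $s$ by inserting the value $v$ into its $(v+1)$-th position. Before treating either item I would first record the (implicitly used) fact that $\gamma$ sends $I_{n-1}$ into $I_n$, by induction: if $r'=\gamma(e_1\cdots e_{n-1})\in I_{n-1}$ and $r=\iota_{e_n}(r')$, then the copied entries $r_i=r'_i$ for $i\le e_n$ still obey $r_i\le i-1$, the new entry $r_{e_n+1}=e_n=(e_n+1)-1$ obeys the bound with equality, and a shifted entry $r_i=r'_{i-1}$ for $i>e_n+1$ obeys $r_i\le(i-1)-1\le i-1$. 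This simultaneously shows $\gamma$ is well defined.

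For item (1) I would prove the precise description
\[
\MAX(r)=\{\,i\in\MAX(r') : i\le e_n\,\}\cup\{e_n+1\},
\]
by examining the same three ranges of positions of $r=\iota_{e_n}(r')$: positions $i\le e_n$ are copied from $r'$, so they lie in $\MAX(r)$ iff they lie in $\MAX(r')$; the inserted position $e_n+1$ always lies in $\MAX(r)$ since its entry equals $e_n$; and for $i>e_n+1$ the entry $r_i=r'_{i-1}\le i-2$ can never equal $i-1$, so such a position is never in $\MAX(r)$. Since every element of $\MAX(r)$ other than $e_n+1$ is at most $e_n$, the element $e_n+1$ is the largest, which is item (1).

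For item (2) I would track the fate of the entry $e_j$ through the left-to-right insertion process that builds $\gamma(e)$. Because $j\in\MAX(e)$ we have $e_j=j-1$, so at the $j$-th step $e_j$ is inserted into position $e_j+1=j$, i.e. the last position of the length-$j$ sequence produced so far. Because $j$ is the \emph{largest} element of $\MAX(e)$, for every $k>j$ we have $e_k\le k-2$, so $e_k$ is inserted at position $e_k+1\le k-1$, strictly before the current last position; hence each later insertion leaves the trailing entry $e_j$ untouched at the end. This same inequality shows that deleting position $j$ yields a genuine inversion sequence $\tilde e=e_1\cdots e_{j-1}e_{j+1}\cdots e_n\in I_{n-1}$, so $\gamma(\tilde e)$ is defined.

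To finish I would run the two insertion processes---one for $e$, one for $\tilde e$---in parallel and prove by induction on the common step index that, from step $j$ onward, the sequence produced for $e$ equals the sequence produced for $\tilde e$ with $e_j$ appended. The two agree on the first $j-1$ insertions; at step $j$ the $e$-process appends $e_j$ while the $\tilde e$-process does nothing, giving the base case; and the inductive step is the verification that inserting $e_{k+1}$ at a position $e_{k+1}+1\le k$ commutes with the trailing $e_j$. Taking $k=n$ yields $\gamma(e)=\gamma(\tilde e)\,e_j$, which is item (2). The only delicate point, and the place I would be most careful, is this last parallel-induction bookkeeping: checking that the insertion position $e_{k+1}+1\le k$ always falls among the first $k$ entries, so that the appended $e_j$ is merely shifted to the new last position rather than split off. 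Everything else reduces to direct inspection of the insertion rule.
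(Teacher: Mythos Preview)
Your argument is correct and is essentially the direct verification the paper has in mind: the paper does not give a proof of this proposition at all, merely asserting that ``$\gamma$ is well-defined and we can easily verify the following propositions,'' and your analysis of the single insertion step $\iota_{e_n}$ is exactly how one unpacks that assertion. Your description $\MAX(r)=\{i\in\MAX(r'):i\le e_n\}\cup\{e_n+1\}$ for item~(1) and the parallel-induction bookkeeping for item~(2) (in particular the check that $e_{k+1}+1\le k$ for $k\ge j$, so the trailing $e_j$ is only ever shifted) are sound and fill in precisely the details the paper omits.
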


\noindent
{\it Proof of Lemma \ref{lem:max2rlmin}.}
It suffices to show that
$\gamma$ is an involution over $I_n$ and satisfies (\ref{equ:max2rlmin}).
%\begin{equation}\label{equ:max2rlmin}
%  (\DIST,\zero,\max,\rlmin) e =  (\DIST,\zero,\rlmin,\max) \gamma(e)
%\end{equation}

We proceed to prove that $\gamma$ is an involution by induction. When $n=1$, $\gamma^2(0)=0$.
Suppose that $\gamma^2(t)=t$ for each $t \in I_{n-1}$ with $n \geq 2$.
We claim that $\gamma^2(e)=e$ for each $e \in I_n$.
By Proposition \ref{prop:gamma}, we
have $e_n+1$ is the largest in $\MAX(\gamma(e_1 e_2 \cdots e_n))$
and hence $e_n$ is the last element of $\gamma^2(e_1 e_2 \cdots e_n)$.
 Combining  the construction of $\gamma$ and (2) in Proposition \ref{prop:gamma},
we deduce  that
\begin{eqnarray*}
% \nonumber to remove numbering (before each equation)
  \gamma^2(e_1 e_2 \cdots e_n) &=& \gamma^2(e_1 e_2 \cdots e_{n-1})e_n \\
   &=& e_1 e_2 \cdots e_{n-1}e_n.
\end{eqnarray*}
The claim is verified.  Hence, $\gamma$ is an involution.

Now, we shall prove relation (\ref{equ:max2rlmin}).
It is easy to check that $(\dist,\zero) e =  (\dist,\zero) \gamma(e)$.
It is left to show that
\begin{equation}\label{equ:max2rlmin2}
   (\max,\rlmin) e =  (\rlmin,\max) \gamma(e)
\end{equation}
Obviously, it holds for $n=1$.
Suppose that   (\ref{equ:max2rlmin2}) holds for $n-1$, where $n \geq 2$, we claim that it also holds for $n$. There are two cases to consider.
If $e_n=n-1$, then $\gamma(e_1 e_2 \cdots e_n)=\gamma(e_1 e_2 \cdots e_{n-1}) (n-1)$. Thus,
\begin{eqnarray}
  \max(e_1 e_2 \cdots e_n) &=& \max(e_1 e_2 \cdots e_{n-1})+1,\label{eq:1} \\
 \rlmin(\gamma(e_1 e_2 \cdots e_n)) &=& \rlmin(\gamma(e_1 e_2 \cdots e_{n-1}))+1.\label{eq:2}
\end{eqnarray}
Combining  (\ref{eq:1}) (\ref{eq:2}) and the  hypothesis that $\max(e_1 e_2 \cdots e_{n-1})=\rlmin(\gamma(e_1$ $ e_2 \cdots e_{n-1}))$, we deduce that $\max(e)=\rlmin(\gamma(e))$. Then $\rlmin(e)=\max(\gamma(e))$ follows from the fact that $\gamma$ is an involution.

If $e_n<n-1$, by Proposition \ref{prop:gamma},  $e_n+1$ is the
largest element of $\MAX(\gamma(e))$. It follows that
$e_n$ is not an $RL$-minimum of $\gamma(e)$. Thus, we have
\begin{eqnarray}
  \max(e_1 e_2 \cdots e_n) &=& \max(e_1 e_2 \cdots e_{n-1}),\label{eq:3} \\
 \rlmin(\gamma(e_1 e_2 \cdots e_n)) &=& \rlmin(\gamma(e_1 e_2 \cdots e_{n-1})).\label{eq:4}
\end{eqnarray}
Similarly,  in view of  (\ref{eq:3}) (\ref{eq:4}) and the hypothesis,
 we have $(\max,\rlmin) e =  (\rlmin,\max) \gamma(e)$ in this case. This completes the proof.\qed

To prove Lemma \ref{lem:keepascrlmax},
we need  the permutation code $b$, namely, a bijection between permutations and inversion sequences, given by  Baril and Vajnovszki \cite{Baril}. We  give a brief review of the code $b$ first.

An  interval $[m,n]$ with $m <n$ is the set $\{x \in \mathbb{N} \colon m \leq x \leq n \}$, where $\mathbb{N}=\{0,1,\cdots\}$.
A labeled interval is a pair $(I,l)$, where $I$ is an interval and
$l$ is an integer.
Given $\pi=\pi_1 \pi_2 \cdots \pi_n \in S_n$ and an integer $i$ with
$0 \leq i <n$, let the $i$-th slice of $\pi$, $U_i(\pi)$,
to be a sequence of labelled intervals constructed recursively by the following process. Set $U_0(\pi)=([0,n],0)$. For $i \geq 1$, assume that $U_{i-1}(\pi)=(I_1,l_1),(I_2,l_2),\cdots,(I_k,l_k)$ is the $(i-1)$-th
slide of $\pi$ and $v$  is the index such that
$\pi_i \in I_v$, then $U_i(\pi)$ is constructed as  follows.
\begin{itemize}
  \item If $\min(I_v)<\pi_i =\max(I_v)$, then $U_i(\pi)$ equals
  \[(I_1,l_1), \cdots , (I_{v-1},l_{v-1}), (J,l_{v+1}),(I_{v+1},l_{v+2}), \cdots , (I_{k-1},l_{k}),(I_{k},l_{k}+1),\]
  where $J=[\min(I_v),\pi_i-1]$.

 \item If $\min(I_v)<\pi_i <\max(I_v)$, then $U_i(\pi)$ equals
  \[(I_1,l_1), \cdots , (I_{v-1},l_{v-1})(H,l_{v}), (J,l_{v+1}),(I_{v+1},l_{v+2}), \cdots , (I_{k-1},l_{k}),(I_{k},l_{k}+1),\]
  where $H=[\pi_i+1, \max(I_v)]$ and $J=[\min(I_v),\pi_i-1]$.

  \item If $\min(I_v)=\pi_i <\max(I_v)$, then $U_i(\pi)$ equals
  \[(I_1,l_1), \cdots , (I_{v-1},l_{v-1})(H,l_{v}), (I_{v+1},l_{v+1}), \cdots , (I_{k-1},l_{k-1}),(I_{k},l_{k}+1),\]
  where $H=[\pi_i+1, \max(I_v)]$.

  \item If $\min(I_v)=\pi_i =\max(I_v)$, then $U_i(\pi)$ equals
  \[(I_1,l_1), \cdots , (I_{v-1},l_{v-1}), (I_{v+1},l_{v+1}), \cdots , (I_{k-1},l_{k-1}),(I_{k},l_{k}+1).\]
\end{itemize}
Let $b(\pi)=b_1 b_2 \cdots b_n \in I_n$, where $b_i=l_v$
such that  $(I_v,l_v)$ is a labelled interval in the
$(i-1)$-th slice of $\pi$ with $\pi_i \in I_v$.

\begin{example}
For $\pi=24135$ and $\sigma=14352$, we have $b(\pi)=00210$ and $b(\sigma)=00102$ with
\begin{align*}
  U_0(\pi)&=([0,5],0), \hspace{3.5cm}   U_0(\sigma) =([0,5],0), \\
  U_1(\pi)&=([3,5],0)([0,1],1),\hspace{1.95cm} U_1(\sigma)=([2,5],0)([0,0],1),\\
  U_2(\pi)&=([5,5],0)([3,3],1)([0,1],2), \hspace{0.4cm} U_2(\sigma)=([5,5],0)([2,3],1)([0,0],2),\\
   U_3(\pi)&=([5,5],0)([3,3],1)([0,0],3),\hspace{0.4cm}
   U_3(\sigma)=([5,5],0)([2,2],2)([0,0],3),\\
    U_4(\pi)&=([5,5],0)([0,0],4),  \hspace{1.95cm}U_4(\sigma)=([2,2],2)([0,0],4).
\end{align*}
\end{example}

 Baril and Vajnovszki  also proved a set-valued  equidistribution as follows.

\begin{lem}\label{lem:Baril}
For any $\pi \in S_n$,
\[(\DES,\IDES,\LRMAX,\LRMIN,\RLMAX) \,\pi=(\ASC,\DIST,\ZERO,\MAX,\RLMIN) \, b(\pi),\]
and so statistics $(\DES,\IDES,\LRMAX,\LRMIN,\RLMAX)$ on $S_n$
has the same distribution as $(\ASC,\DIST,\ZERO,\MAX,\RLMIN)$ on $I_n$.
\end{lem}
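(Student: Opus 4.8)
This identity is due to Baril and Vajnovszki, and the plan is to recover it by a direct analysis of the code $b$. Since the slice $U_i(\pi)$ depends only on $\pi_1,\dots,\pi_i$, the construction reads $\pi$ from left to right, so the natural argument is an induction on $n$ in which the last letter $\pi_n$ is appended. First I would record two structural facts. The intervals appearing in $U_i(\pi)$ are precisely the maximal blocks of $[0,n]\setminus\{\pi_1,\dots,\pi_i\}$, listed from the largest values (leftmost, always carrying label $0$) down to the smallest (rightmost, carrying the largest label); and the prefix word $b_1\cdots b_{n-1}$ depends only on the relative order of $\pi_1,\dots,\pi_{n-1}$, so that $b_1\cdots b_{n-1}=b(\tau)$, where $\tau$ is the standardisation of $\pi_1\cdots\pi_{n-1}$. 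The second fact holds because enlarging the ground set merely stretches the top interval, whose label is $0$ and is therefore never recorded; it reduces the inductive step to understanding the single new entry $b_n$ together with the single new letter $\pi_n$.

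Next I would dispatch the four geometric coordinates from the interval picture. A letter $\pi_i$ lands in the top interval exactly when it exceeds every earlier letter, so $b_i=0$ iff $\pi_i$ is a left-to-right maximum, giving $\ZERO(b(\pi))=\LRMAX(\pi)$; dually, the bottom interval carries label $i-1$ at step $i$ and receives $\pi_i$ exactly when $\pi_i$ is a left-to-right minimum, giving $\MAX(b(\pi))=\LRMIN(\pi)$. A short local inspection of the four splitting rules shows that $\pi_i>\pi_{i+1}$ iff $b_i<b_{i+1}$, hence $\ASC(b(\pi))=\DES(\pi)$. Finally, the same interval analysis applied to suffixes shows that $\pi_i$ dominates all later letters iff $b_i$ is dominated by all later entries, giving $\RLMIN(b(\pi))=\RLMAX(\pi)$. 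Each of these is a genuine equality of index sets, and under the reduction above each is preserved when $\pi_n$ is appended.

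The main obstacle is the remaining coordinate $\IDES\leftrightarrow\DIST$, which carries the real (double-Eulerian) content; at the level of cardinalities it is the identity $\dist(b(\pi))=\ides(\pi)$, and the set-level refinement is what forces the code to use the delicate relabelling of Case~1, where the newly created lower block $J$ inherits the label $l_{v+1}$ of its right neighbour rather than $l_v$. The plan is to match the positions carrying the last occurrence of each nonzero label of $b(\pi)$ with the inverse descents of $\pi$, namely with the values $k$ occurring to the right of $k+1$. The hard part will be the inductive bookkeeping: appending $\pi_n$ splits exactly one interval, and one must show that this freezes a nonzero label as a new last occurrence, or cancels an existing one, in exact agreement with whether placing $\pi_n$ last creates or destroys an inverse descent at $\pi_n$. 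Tracking how the largest active label evolves through the Case~1 relabelling is precisely what prevents a purely local argument, and it is here that the full label convention must be used.

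Once the five coordinate statements are in place, the tuple identity of the lemma follows, and since $b$ is a bijection from $S_n$ onto $I_n$, the asserted equidistribution is immediate.
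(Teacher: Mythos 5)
First, a point of comparison: the paper does not prove this lemma at all --- it is quoted from Baril and Vajnovszki \cite{Baril}, so the ``paper's proof'' is a citation. Your attempt to reprove it from scratch is therefore doing more than the paper does. The parts you do carry out are sound: the slices $U_i(\pi)$ are indeed the maximal blocks of $[0,n]\setminus\{\pi_1,\dots,\pi_i\}$ listed from top to bottom, the top block keeps label $0$ as long as $n$ is unused and the bottom block has label $i$ after $i$ steps, and from this the four coordinates $\ZERO\leftrightarrow\LRMAX$, $\MAX\leftrightarrow\LRMIN$, $\ASC\leftrightarrow\DES$ and $\RLMIN\leftrightarrow\RLMAX$ follow essentially as you describe (checking against the paper's example $b(24135)=00210$ confirms each, provided the set-valued statistics on $\pi$ are read as \emph{position} sets).

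The genuine gap is the fifth coordinate, $\DIST(b(\pi))=\IDES(\pi)$, which is the entire content of the Baril--Vajnovszki theorem --- it is precisely the ``double Eulerian'' part --- and for it you give only a plan, explicitly deferring the inductive bookkeeping through the Case~1 relabelling. Nothing is actually proved there. Moreover, the invariant you propose to establish is not the correct one: you aim to match $\DIST(b(\pi))$ with ``the values $k$ occurring to the right of $k+1$'', i.e.\ with $\DES(\pi^{-1})$. For $\pi=24135$ one has $b(\pi)=00210$ and $\DIST(b(\pi))=\{3,4\}$, whereas $\DES(\pi^{-1})=\DES(31425)=\{1,3\}$; the set that actually coincides with $\DIST(b(\pi))$ is $\{\pi^{-1}(k)\colon k\in\DES(\pi^{-1})\}=\{3,4\}$, the positions in $\pi$ where the inverse-descent values sit (the same discrepancy occurs for $\sigma=14352$). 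So before your induction can even be set up, the inductive statement must be corrected, and the hard step --- showing that appending $\pi_n$ creates or destroys a ``last occurrence of a nonzero label'' in exact agreement with the change in this position set --- still has to be done. As it stands the proposal settles the four easy coordinates and only gestures at the one that matters; for the purposes of this paper the honest route is simply to cite \cite{Baril}.
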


Let $\alpha=\comple \circ \inver \circ b^{-1} \circ \gamma \circ b \circ  \inver  \circ \comple $,
then it is easy to check that $\alpha$ is an involution on $S_n$.
Now, we are ready to give the proof of Lemma \ref{lem:keepascrlmax}.

\noindent
{\it Proof of Lemma \ref{lem:keepascrlmax}.}
Given $\pi \in S_n$, it is enough to show that
\begin{equation}\label{equ:keepascrlmax}
 (\asc,\rlmax,\lrmax,\rlmin) \, \pi = (\asc,\rlmax,\rlmin,\lrmax)\, \alpha(\pi)
\end{equation}

Notice that $\asc(\pi)=\des(\comple(\pi))$ and $\des(\pi)=\ides(\inver(\pi)).$
Combining with Lemma \ref{lem:max2rlmin} and Lemma \ref{lem:Baril},
we see that  $\asc(\pi)=\asc(\alpha(\pi))$.

Furthermore, the following properties are easy to check.
\begin{itemize}
  \item[1)] $\pi_i$ is an RL-maximum of $\pi$ if and only if $i$ is an
  RL-maximum of $\pi^{-1}$.
  \item[2)]$\pi_i$ is a LR-minimum of $\pi$ if and only if $i$ is a
  LR-minimum of $\pi^{-1}$.
  \item[3)]$\pi_i$ is a LR-maximum of $\pi$ if and only if $i$ is a
 RL-minimum of $\pi^{-1}$.
  \item[4)]$\pi_i$ is an RL-minimum of $\pi$ if and only if $i$ is an
  LR-maximum of $\pi^{-1}$.
\end{itemize}
It follows that
\begin{equation}\label{equ:inverse}
  (\rlmax,\lrmin,\lrmax,\rlmin) \pi= (\rlmax,\lrmin,\rlmin,\lrmax) \pi^{-1}.
\end{equation}
Also, we have
\begin{equation}\label{equ:complement}
 (\rlmax,\lrmin,\lrmax,\rlmin) \pi= (\rlmin,\lrmax,\lrmin,\rlmax) \pi^{c}.
\end{equation}

Based on equations (\ref{equ:inverse}), (\ref{equ:complement}),
Lemma \ref{lem:max2rlmin} and Lemma \ref{lem:Baril}, we deduce that
\[
(\rlmax,\lrmax,\rlmin) \, \pi = (\rlmax,\rlmin,\lrmax)\, \alpha(\pi),
\]
as desired. This completes the proof.
\qed

%\begin{lem}\label{lem:inverse}
%Given $\pi \in S_n$, then we have
%\begin{itemize}
%  \item[1] $\pi_i$ is an RL-maximum of $\pi$ if and only if $i$ is an
%  RL-maximum of $\pi^{-1}$.
%  \item[2]$\pi_i$ is a LR-minimum of $\pi$ if and only if $i$ is a
%  LR-minimum of $\pi^{-1}$.
%  \item[3]$\pi_i$ is a LR-maximum of $\pi$ if and only if $i$ is a
% RL-minimum of $\pi^{-1}$.
%  \item[4]$\pi_i$ is an RL-minimum of $\pi$ if and only if $i$ is an
%  LR-maximum of $\pi^{-1}$.
%   \[(\rlmax,\lrmin,\lrmax,\rlmin) \pi= (\rlmax,\lrmin,\rlmin,\lrmax) \pi^{-1}.\]
%\end{itemize}
%
%\end{lem}
%
%\begin{lem}\label{lem:complement}
%Given $\pi \in S_n$ and $ p= \pi^c$, then we have
%\begin{itemize}
%  \item[1] $\pi_i$ is an RL-maximum of $\pi$ if and only if $p_i$ is an
%  RL-minimum of $p$.
%  \item[2]$\pi_i$ is a LR-minimum of $\pi$ if and only if $p_i$ is a
%  LR-maximum of $p$.
%  \item[3]$\pi_i$ is a LR-maximum of $\pi$ if and only if $p_i$ is a
% LR-minimum of $p$.
%  \item[4]$\pi_i$ is an RL-minimum of $\pi$ if and only if $p_i$ is an
%  RL-maximum of $p$.
%  \[(\rlmax,\lrmin,\lrmax,\rlmin) \pi= (\rlmin,\lrmax,\lrmin,\rlmax) \pi^{c}.\]
%\end{itemize}
%\end{lem}
For a set $X$, let $n-X$ be the set obtained by $n$ minus each element in $X$.
We are now ready to  prove Theorem \ref{thm:rlm-rlmax-1}.

\noindent
{\it Proof of Theorem \ref{thm:rlm-rlmax-1}.}
In view of Theorem \ref{thm:sn},
$(\rlm,\wnm,\asc)$ is equally distributed with $(\rlm,\rlmin,\asc)$ on $S_n$. It is enough to construct a bijection $\beta$ over $S_n$ such that
\begin{equation}\label{equ:rlm-rlmax-1}
  (\rlm,\wnm,\asc) \pi= (\rlmax-1,\rlmin, \asc) \beta(\pi)
\end{equation}
for each $\pi=\pi_1 \pi_2 \cdots \pi_n \in S_n$.

Assume that
 $\pi=x 1 y$, where $x$ and $y$ can be empty. Let $st(x)=(\overline{x},X)$ and
$st(y)=(\overline{y},Y)$.
Then set $\beta(\pi)=\sigma$, where
$\sigma=y' n x'$,
$x'=st^{-1}(\alpha(\overline{x}), n+1-X)$ and $y'=st^{-1}( \overline{y}^{rc},n+1-Y).$

To show that $\beta$ is a bijection, it suffices to construct its inverse. Given $\sigma =w n v \in S_n$, where $w$ and $v$ can be empty.
Let $st(w)=(\overline{w},W)$ and
$st(v)=(\overline{v},V)$.
Then set $\delta(\sigma)=\pi$, where
$\pi=v' 1 w'$,
$v'=st^{-1}(\alpha(\overline{v}), n+1-V)$ and $w'=st^{-1}( \overline{w}^{rc},n+1-W).$  Notice that $\alpha$ is an involution,
$\delta$ is the inverse of $\beta$. Hence, $\beta$ is a bijection.

In the following, we proceed to prove (\ref{equ:rlm-rlmax-1}).
Notice that
$\rlm(\pi)=\rlmax(x)$ and $\rlmax(\sigma)=\rlmax(x')+1$. By (\ref{equ:keepascrlmax}), we have $\rlmax(x)=\rlmax(x')$.
It follows that $\rlm(\pi)=\rlmax(\sigma)-1$.

To prove $\wnm(\pi)=\rlmin(\sigma)$, it is enough to show that  $\lrmax(\pi)=\rlmin(\sigma)$ in view of Lemma \ref{wnm}.
Let $\lrmax_{>s}(u)$ be the number of LR-maxima of the word $u$
which are larger than $s$, and $\rlmin_{<s}(u)$ be the
 number of RL-minima of the word $u$
which are smaller than $s$.
We consider the following two cases.
\begin{itemize}
  \item   x is empty.  Thus $\pi=1y$. It follows that
  $\lrmax(\pi)=1+\lrmax(y)$ and   $\rlmin(\sigma)=1+\rlmin(y')$.
  Clearly,  $\lrmax(y)=\rlmin(y')$. Hence, we have $\lrmax(\pi)=\rlmin(\sigma)$.
  \item   x is not empty. By the block decomposition, we have $\lrmax(\pi)=\lrmax(x)+\lrmax_{>max(X)}(y)$ and
$\rlmin(\sigma)=\rlmin(x')+\rlmin_{<min(n+1-X)}(y')$.
Since $\lrmax(x)=\rlmin(x')$ and
$\lrmax_{>max(X)}(y)=\rlmin_{<min(n+1-X)}(y')$,
then $\lrmax(\pi)=\rlmin(\sigma)$ follows.
\end{itemize}

Finally, we notice that $\asc(\pi)=\asc(x)+1+\asc(y)$ and
$\asc(\sigma)=\asc(x')+1+\asc(y')$.
Since $\asc(x)=\asc(x')$ and $\asc(y)=\asc(y')$,
we deduce that $\asc(\pi)=\asc(\sigma)$.
This completes the proof. \qed

\begin{example}
Let $\pi=593721684$, then $n=9$, $x=59372$ and $y=684$.
$(\bar{x},X)=\st^{-1}(x)=(35241,\{2,3,5,7,9\})$ and
$(\bar{y},Y)=\st^{-1}(y)=(231,\{4,6,8\})$.
$\alpha(\bar{x})=51342$ can be obtained as follows
\begin{align*}
 35241 \xrightarrow{c} 31425 \xrightarrow{i} 24135
  \xrightarrow{b} 00210 \xrightarrow{\gamma}
  00102 \xrightarrow{b^{-1}}
  14352 \xrightarrow{i} 15324 \xrightarrow{c} 51342.
\end{align*}
Then, $x'=\st^{-1}(51342,\{1,3,5,7,8\})=81573$,
 $y'=\st^{-1}(312,\{2,4,6\})=624$ and $\sigma=\beta(\pi)=624981573$.
 It is easy to check that $\rlm(\pi)=\rlmax(\sigma)-1=3$,
 $\wnm(\pi)=\rlmin(\sigma)=2$ and $\asc(\pi)=\asc(\sigma)=4$.

\end{example}

\section*{Acknowledgement}
We wish to thank the referees for valuable suggestions. The author was supported by the National Natural Science Foundation of China (No.~11701420) and the
Natural Science Foundation Project
of Tianjin Municipal Education Committee (No.~2017KJ243,~No.~2018KJ193).

 \end{document}